\theoremstyle{dgthm}
\newtheorem{theorem}{Theorem}
\newtheorem{corollary}{Corollary}
\newtheorem{proposition}{Proposition}
\newtheorem{lemma}{Lemma}
\newtheorem{problem}{Problem}
\theoremstyle{dgdef}
\newtheorem{definition}{Definition}
\newtheorem{example}{Example}
\begin{document}



\newcommand\xqed[1]{%
  \leavevmode\unskip\penalty9999 \hbox{}\nobreak\hfill
  \quad\hbox{#1}}
\newcommand\demo{\xqed{$\blacksquare$}}

\newcommand{\subjclass}[2][]{%
  \noindent{\textbf{Mathematics Subject Classification #1:}} #2\par
}

\title{Hearing Exotic Smooth Structures}
\runningtitle{Hearing Exotic Smooth Structures}

\author[1]{Leonardo F. Cavenaghi}
\author*[2]{João Marcos do Ó}
\author[3]{Llohann D. Speran\c ca} 
\runningauthor{L. F. Cavenaghi et al.}
\affil[1]{\protect\raggedright 
College of Arts and Sciences, Department of Mathematics, University of Miami, Ungar Bldg, 1365 Memorial Dr 515, Coral Gables, FL 33146\\ \&
Instituto de Matemática, Estatística e Computação Científica da Universidade Estadual de Campinas (Unicamp), Cidade Universitária, Campinas - SP, 13083-856, e-mail:leonardofcavenaghi@gmail.com}
\affil[2]{\protect\raggedright 
Departamento de Matemática -- Universidade Federal da Paraíba - Campus 1, 1º floor - Lot. Cidade Universitaria
58051-900, João Pessoa, PB, Brazil, e-mail: jmbo@mat.ufpb.br}
\affil[3]{\protect\raggedright 
Instituto de Ciência e Tecnologia -- Unifesp, Avenida Cesare Mansueto Giulio Lattes, 1201, 12247-014, São José dos Campos, SP, Brazil, e-mail: lsperanca@gmail.com}
	
	
\abstract{ This paper explores the existence and properties of \emph{basic} eigenvalues and eigenfunctions associated with the Riemannian Laplacian on closed, connected Riemannian manifolds featuring an effective isometric action by a compact Lie group. Our primary focus is on investigating the potential existence of homeomorphic yet not diffeomorphic smooth manifolds that can accommodate invariant metrics sharing common basic spectra. We establish the occurrence of such scenarios for specific homotopy spheres and connected sums. Moreover, the developed theory demonstrates that the ring of invariant admissible scalar curvature functions fails to recover the smooth structure in many examples. We show the existence of homotopy spheres with identical rings of invariant scalar curvature functions, irrespective of the underlying smooth structure.}
\keywords{Basic spectra, Prescribing Scalar Curvature, Kazdan--Warner problem, Exotic manifolds, Group of symmetries}

\maketitle

\subjclass[2020]{58J53,53C18,57R60,58J90}

\section{Introduction}

A very classical and well-explored subject of geometric analysis is that of \emph{hearing shapes}. Following \cite{shapeofadrum}, it addresses the relationship between the geometry of Riemannian manifolds and the eigenvalues of its Laplacian operator acting on functions. Some remarkable developments are presented in \cite{milnor1964eigenvalues,milnor1964eigenvalueII,gordon1992one,gordon1996cant,buser1982spectrum, lauret2023spectrally,tanno1,tanno2,tanno3}.

Shortly, this paper mainly focuses on whether two homeomorphic but not diffeomorphic smooth manifolds admit invariant metrics sharing the same \emph{basic spectrum} (Definition \ref{def:singularspectrum}). With this aim, we use the analytical machinery and example constructions appearing in \cite{SperancaCavenaghiPublished} to compare the basic spectra of two homeomorphic but not diffeomorphic manifolds. Theorem \ref{thm:spectraintro} provides examples of pairs of homeomorphic but not diffeomorphic manifolds $M$ and $M'$ admitting invariant metrics $\mathsf{g}_M,~\mathsf{g}_{M'}$ that can be chosen with the same basic spectrum or not. Byproducts of the theory include showing that the ring of admissible scalar curvature functions do not distinguish smooth structures for plenty of homotopy spheres, Theorem \ref{ithm:aquiela}. This establishes the $G$-invariant version of the classical Kazdan--Warner problem \cite{kazdaninventiones} for some equivariantly related manifolds \cite{SperancaCavenaghiPublished}. 

The geometric constructions in this paper are based on the following setup. We consider a special case of principal $(G, H)$-bibundle in which $G=H$. Following \cite{Blohmann2008}, recall that a {\em groupoid} $G = (G_1 \rightrightarrows G_0)$ is a category in which each arrow in $G_1$ has an inverse.  Let $G$ and $H$ be Lie groupoids. A manifold $P$ with a left $G$-action and a right $H$-action which commute is called a smooth $G$-$H$-bibundle. We much benefit from the geometric constructions in \cite{SperancaCavenaghiPublished,speranca2016pulling}. Indeed, we consider smooth manifolds $P$ carrying commuting free $G$-actions whose orbit spaces $M, M'$ carry $G$-actions. We name this construction a $\star$-diagram; see Equation \eqref{eq:CDintro}. They constitute an example of \emph{principal $G$-bibundle} ($G=H$).

A $\star$-diagram $M\leftarrow P\rightarrow M'$ promotes a Hausdorff-Morita equivalence between the $G$-varieties $M$ and $M'$, as firstly remarked in \cite{SperancaCavenaghiPublished} and proved in \cite{cavenaghi2024newlookmilnor}. We hope our geometric realizations shed light on the study of Lie groupoid Morita equivalence via Hilsum–Skandalis maps \cite{Blohmann2008, Hilsum1987}. This specifies the study of leaf spaces \cite{Haefliger1984}, and in the particular case of Riemannian foliations, the isospectrality question for the basic Laplacian is rather natural \cite{Philippe1997}. For instance, it has been shown in \cite{Richardson2001} that the spectrum of the basic Laplacian of a Riemannian foliation can be identified with the $\mathrm{SO}(q)$-invariant spectrum of its Molino quotient manifold $W$ via the Molino frame bundle lift (which is itself a bibundle, see, e.g., \cite{Lin2020, Alexandrino2022}).

Let $(X,\mathsf{g})$ be a closed, connected $d$-dimensional Riemannian manifold with an effective isometric action by a compact Lie group $G$; $(X,\mathsf{g})$ is also called a \emph{Riemannian $G$-manifold}. Let $W^{k,p}(X), k\in\mathbb{Z}_{\geq 0}, p\in [1,\infty)$ denote the Sobolev space recovered by smooth functions on $X$ via taking the closure of $C^{\infty}(X)$ with respect to the norm:
\[\|u\|_{k,p}^p :=\sum\limits_{j=0}^k\int_X |\nabla^j u|^p \mathrm{d}\mu_{\mathsf{g}},\]
where $\nabla$ denotes the Levi--Civita connection of $\mathsf{g}$, $\nabla^ju$ is the $j^\text{th}$-covariant derivative of $u$, and the norm $|\nabla^j u|$ is computed using the tensor product metric induced by $\mathsf{g}$.

Once a Haar measure on $G$ is fixed, any element $u\in W^{k,p}(X)$ can be averaged along $G$ to produce a \emph{basic} or \emph{invariant} function $\bar u$, i.e., $\bar u(g x) = \bar u(x)$ for almost every $x\in X$ and every $g\in G$. We denote the set of such elements by $W^{k,p}_G(X)$. Being $C^{\infty}_G(X)$ the set of smooth basic functions in $(X,G)$, it can be shown (see e.g. \cite[Lemma 5.4, p. 91]{hebey1996sobolev}) that $\overline{C^{\infty}_G(X;\mathbb{R})}^{\|\cdot\|_{W^{k,p}(M)}} = W^{k,p}_G(X)$. Throughout this manuscript, we adopt the convention $W^{0,p}_G(X)=L^p_G(X).$ 

Let $-\Delta_{\mathsf{g}}:=-\mathrm{div}_{\mathsf{g}}(\nabla)$ be the (negative of the) Riemannian Laplacian of $\mathsf{g}$. It is well known that this is a formally self-adjoint, nonnegative elliptic operator, and in particular, its spectrum is a closed, countable, discrete, unbounded set consisting only of eigenvalues with finite multiplicities, which can therefore be arranged as a sequence of nonnegative real numbers $0=\lambda_0\leq \lambda_1\leq\lambda_2\leq\ldots\uparrow\infty$. This is known as the \emph{spectrum of $(X,\mathsf{g})$}. 

In the present context, where $(X,\mathsf{g})$ is a Riemannian $G$-manifold, we want to consider the subset of the spectrum consisting of eigenvalues that correspond to invariant eigenfunctions. Thus, we consider the $G$-invariant eigenvalue problem:
\begin{problem}\label{prob:invarianteigenvalue}
    Find $(u,\lambda)\in C^{\infty}_G(X)\times \mathbb  R$ such that
    \[-\Delta_{\mathsf{g}}u=\lambda u.\]
\end{problem}
To properly guarantee the existence of a solution to Problem \ref{prob:invarianteigenvalue}, we define:
\begin{definition}
Let $\mathcal{H}(X):=\{u\in W^{1,2}(X): \int_X u \mathrm{d}\mu_{\mathsf{g}} = 0\}$ and consider the subset $\mathcal{H}_G(X)$ consisting of invariant functions on $\mathcal{H}(X)$. A weak $G$-invariant solution for the Problem \ref{prob:invarianteigenvalue} is a pair $(u,\lambda)\in \mathcal{H}_G(X)\times \mathbb  R$ such that
    \begin{equation}\label{eq:weaksolution}
        \int_X\langle \nabla u,\nabla v\rangle\mathrm{d}\mu_{\mathsf{g}} = \lambda \int_Xuv\mathrm{d}\mu_{\mathsf{g}}
    \end{equation}
    for every $v\in \mathcal H(X)$. We say that $u$ is a \emph{basic weak eigenfunction of $-\Delta_{\mathsf{g}}$ with basic weak eigenvalue $\lambda$}.
\end{definition}

For a deep explanation of the basics related to the Laplacian spectra on manifolds, we refer the reader to \cite{chavel}. 

Note that the left-hand side in equation \eqref{eq:weaksolution} is given by
\[\langle u,v\rangle_{1,2} - \int_Xuv\mathrm{d}\mu_{\mathsf{g}}.\]
where the inner product $\langle u,v\rangle_{1,2}$ is the one associated with the norm $\|\cdot\|_{1,2}$. It can be checked, using that $X$ is closed and connected, that the norm $\|u\|^2:=\int_X |\nabla u|^2 \mathrm{d}\mu_{\mathsf{g}}$ defines a norm on $\mathcal{H}(X)$ which is equivalent to the norm $\|\cdot{}\|_{1,2}$. Associated with $\|\cdot\|$ we have the inner product
\[\langle u,v\rangle_{} := \int_X\langle\nabla u,\nabla v\rangle\mathrm{d}\mu_{\mathsf{g}}.\]
Hence, equation \eqref{eq:weaksolution} is equivalent to
\[\langle u,v\rangle_{}=\lambda \int_Xuv\mathrm{d}\mu_{\mathsf{g}},~\forall v\in \mathcal H(X).\]

On the other hand, for each fixed $u\in \mathcal H_G(X)$ the map
\[v\mapsto \int_Xuv\mathrm{d}\mu_{\mathsf{g}}\]
defines a linear functional with domain $(\mathcal H_G(X),\|\cdot\|)$. The Riesz representation theorem ensures the existence of $\lambda >0$ and $u^*_{\lambda}\in \mathcal H_G(X)$ such that
\[\langle u^*_{\lambda},v\rangle=\lambda \int_Xuv\mathrm{d}\mu_{\mathsf{g}}~\forall v \in \mathcal H_G(X).\]
The principle of symmetric criticality of Palais \cite{palais1979}
guarantees that the same holds for every  $v\in {\mathcal H}(X)$. Picking $v=u^*_{\lambda}$ shows that $u^*_{\lambda}=u$ a.e.. The classical regularity theory of PDEs guarantees the existence of a smooth invariant representative $u$ solving Problem \ref{prob:invarianteigenvalue}, i.e., $u\in C^{\infty}_G(X)$. 
\begin{definition}
    Let $(X,\mathsf{g})$ be a closed Riemannian manifold with isometric effective action by a compact connected Lie group $G$. Let $(u,\lambda)\in C^{\infty}_G(X)\times \mathbb  R_{>0}$ solving Problem \ref{prob:invarianteigenvalue}. We say that $\lambda$ is a \emph{basic eigenvalue} of $-\Delta_{\mathsf{g}}$ associated with the \emph{basic eigenfunction} $u$. 

    It can be also shown that the collection $\{\lambda\}$ of all possible invariant eigenvalues of $-\Delta_{\mathsf{g}}$ is increasing and unbounded, i.e., $0<\lambda_1\leq \lambda_2\leq \dots\uparrow \infty$. To such a collection we name \emph{the basic spectrum} of $-\Delta_{\mathsf{g}}$ and denote it as $\mathrm{Spec}_G(-\Delta_{\mathsf{g}})$.
\end{definition}

\begin{definition}\label{def:singularspectrum}
    Given two Riemannian manifolds $(X,\mathsf{g}),~(X',\mathsf{g}')$ with isometric actions by the same Lie group $G$, we say that they have the same \emph{basic spectrum} if $\mathrm{Spec}_G(-\Delta_{\mathsf{g}})=\mathrm{Spec}_G(-\Delta_{\mathsf{g}'})$.
\end{definition}

Definition \ref{def:singularspectrum} contrasts with other concepts already present in the literature. Given a Riemannian manifold $X$ with an isometric action by a Lie group $G$ (a Riemannian $G$-manifold), we know that $G$ has a natural representation $\tau_G$ on $L^2(X)$ where for each $f \in L^2(X)$ the function $\tau_G(f) := g.f$ is given by

\[
(g.f)(x) := f(g^{-1}x);
\]
note that, since $G$ acts via isometries, the representation $\tau_G$ commutes with the Laplacian $\Delta$ of $M$. Then, two Riemannian $G$-manifolds $X$ and $X'$ are said to be \emph{equivariantly isospectral} (with respect to $G$) if there is a unitary map $U : L^2(X) \to L^2(X')$ such that \cite{sutton2010equivariant,dryden2012advances}
\begin{itemize}
    \item[(i)] $U \circ \Delta = \Delta' \circ U$, that is, $X$ and $X'$ are isospectral,
    \item[(ii)] $U \circ \tau_G = \tau'_G \circ U$, i.e., the natural representations are equivalent via $U$.
\end{itemize}

Thus, the notion of isospectrality presented in Definition \ref{def:singularspectrum} is generally weaker than the above one of equivariant isospectrality, and it is, in fact, the one appearing in \cite{adelstein2017ginvariant}. In \cite{sunada1985riemannian}, Sunada studies whether two finite groups $H_1, H_2$ acting on a smooth manifold $X$ yield coinciding $H_i$-invariant spectrum on $X$. Then, in \cite{sutton2002isospectral}, Sunada's result is generalized to connected groups. Finally, Theorem 2.5 in \cite{adelstein2017ginvariant} goes beyond, stating the following: 
\begin{theorem}[Theorem 2.5 in \cite{adelstein2017ginvariant}]\label{thm:aldestein}
    Let $X$ be a compact Riemannian manifold and $G \leq \text{Isom}(X)$ a compact Lie group. Suppose that $H_1, H_2 \leq G$ are closed, representation-equivalent subgroups. Then, the $H_i$-invariant spectra of the Laplacian on $X$ are equal.
\end{theorem}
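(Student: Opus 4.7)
The plan is to exploit the fact that $\Delta_{\mathsf{g}}$ commutes with the $G$-action and reduce the question to a purely representation-theoretic comparison of the $H_i$-fixed subspaces inside each (finite-dimensional) eigenspace of the full Laplacian.

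First, since $G\le \mathrm{Isom}(X)$, the unitary representation $\tau_G$ on $L^2(X)$ commutes with $\Delta$, so every eigenspace $E_\lambda\subset L^2(X)$ of $\Delta$ is a finite-dimensional unitary $G$-module. Applying Peter--Weyl to each $E_\lambda$ I would decompose it into $G$-isotypic components
\[E_\lambda \;=\; \bigoplus_{\pi\in\widehat G} V_\pi\otimes N_\lambda^\pi,\]
where $N_\lambda^\pi:=\mathrm{Hom}_G(V_\pi,E_\lambda)$ is the multiplicity space. Taking $H_i$-fixed vectors commutes with this finite direct sum, giving
\[E_\lambda^{H_i}\;=\;\bigoplus_{\pi\in\widehat G} V_\pi^{H_i}\otimes N_\lambda^\pi, \qquad \dim E_\lambda^{H_i}\;=\;\sum_{\pi\in\widehat G}(\dim V_\pi^{H_i})(\dim N_\lambda^\pi).\]

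Second, I would invoke Frobenius reciprocity for compact Lie groups: for every $\pi\in\widehat G$,
\[\dim V_\pi^{H_i}\;=\;\dim\mathrm{Hom}_{H_i}(\mathbf 1,V_\pi|_{H_i})\;=\;\dim\mathrm{Hom}_G\bigl(\mathrm{Ind}_{H_i}^G\mathbf 1,V_\pi\bigr).\]
By hypothesis $H_1$ and $H_2$ are representation-equivalent, i.e.\ $\mathrm{Ind}_{H_1}^G\mathbf 1\cong \mathrm{Ind}_{H_2}^G\mathbf 1$ as $G$-modules, so $\dim V_\pi^{H_1}=\dim V_\pi^{H_2}$ for every irreducible $\pi$. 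Combining this with the dimension formula above yields $\dim E_\lambda^{H_1}=\dim E_\lambda^{H_2}$ for every eigenvalue $\lambda$ of $\Delta$. Since the $H_i$-invariant spectrum is obtained precisely by reading off each $\lambda$ with multiplicity $\dim E_\lambda^{H_i}$, the two basic spectra coincide.

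The only real subtlety is justifying the isotypic decomposition and Frobenius reciprocity in the compact (not necessarily finite) setting; because one always works inside the finite-dimensional space $E_\lambda$, both reduce to their standard forms, so this step is not a genuine obstruction. The conceptual heart of the argument is the translation, via Frobenius reciprocity, of the \emph{analytic} equality $\dim E_\lambda^{H_1}=\dim E_\lambda^{H_2}$ into the \emph{algebraic} equality $\mathrm{Ind}_{H_1}^G\mathbf 1\cong \mathrm{Ind}_{H_2}^G\mathbf 1$; this is the point where representation equivalence is exactly the right hypothesis, and where the analogous statements for non-equivalent subgroups would fail.
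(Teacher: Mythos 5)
Your argument is correct: decomposing each finite-dimensional eigenspace $E_\lambda$ into $G$-isotypic pieces, passing to $H_i$-fixed vectors, and converting $\dim V_\pi^{H_i}$ into $\dim\mathrm{Hom}_G(\mathrm{Ind}_{H_i}^G\mathbf 1,V_\pi)$ via Frobenius reciprocity is exactly how representation equivalence of $H_1,H_2$ forces $\dim E_\lambda^{H_1}=\dim E_\lambda^{H_2}$ for every $\lambda$. Note that the paper itself gives no proof of this statement --- it is quoted verbatim from Adelstein--Sandoval --- and your argument is essentially the standard one found in that reference (in the tradition of Sunada and Sutton), so there is nothing in the paper to contrast it with.
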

Our result in Proposition \ref{prop:joint}, combined with Example  \ref{ex:localmodelstar}, show that the conclusion in Theorem 2.5 in \cite{adelstein2017ginvariant} extends to every pair of manifolds $M, M'$ fitting a $\star$-diagram. 

Lastly, it may be the case that our appearing constructions bring insights on the following. Although two isospectral manifolds need not be isometric, further rigidity can be questioned. For instance, the result of Tanno in \cite{tanno1} ensures that \emph{any compact Riemannian manifold isospectral to a round sphere $(\mathrm{S}^n,\mathsf{g}_{\mathrm{round}})$ is necessarily isometric to $(\mathrm{S}^n,\mathsf{g}_{\mathrm{round}})$ if $n\leq 6$.} Up to the smooth Poincaré conjecture in dimension 4, \emph{exotic spheres} appear precisely firstly when $n=7$. It is still unknown whether round $7$ spheres are spectrally unique (in the sense of Tanno's result), but it is known that no exotic sphere can carry a round metric. Based on our results, we are tempted to think that \emph{the spectral-uniqueness can not occur for spheres in every dimension $n\geq 7$ where an exotic sphere exists}.

\section{Basic spectra of $G$-manifolds related by $\star$-diagrams}

We outline a general procedure for constructing exotic manifolds based on their classical counterparts, extensively discussed in \cite{speranca2016pulling, SperancaCavenaghiPublished,cavenaghi2019positive}. This is done by considering pairs of closed manifolds $M$, $M'$ carrying effective actions by a compact Lie group $G$, and fitting into a so-called \emph{$\star$-diagram} $M\leftarrow P\rightarrow M'$, which essentially realize $M$ and $M'$ as quotients of a single manifold $P$ by free commuting $G$-actions. Many exotic manifolds fit into these diagrams, and one can then use this structure to compare their invariant geometries once invariant metrics are considered. Notably, the concept of an \emph{exotic sphere} originated in the 1950s with J. Milnor's groundbreaking work \cite{mi}. Milnor introduced a family of $7$-dimensional manifolds $\Sigma^7$ homeomorphic to the classical sphere $\mathrm{S}^7$ but not diffeomorphic.

In the diagram \eqref{eq:CDintro} below, which will henceforth be called a $\star$-diagram, $P$ represents a principal $G$-manifold - a manifold equipped with a free action by a compact Lie group $G$, denoted by $\bullet$. This action implies $\pi$ defines a principal bundle over $M$ with total space $P$. We also assume the existence of another $G$-action, denoted by $\star$, which is both free and commutative with $\bullet$. This action makes $\pi'$ a principal bundle over $M'$ with total space $P$. We  encode everything in the following:

	\begin{equation}\label{eq:CDintro}
		\begin{xy}\xymatrix{& G\ar@{..}[d]^{\bullet} & \\ G\ar@{..}[r]^{\star} & P\ar[d]^{\pi}\ar[r]^{\pi'} &M'\\ &M&}\end{xy}
	\end{equation}	
Diagram \eqref{eq:CDintro} yields a principal $(G,G)$-bundle (shortly, a principal $G$-bundle), \cite{Blohmann2008}. We provide some explicit examples.
		\begin{example}[The Gromoll--Meyer exotic sphere]\label{ex:gromollmeyer}
		This construction first appeared in \cite{gm} and was first put in a $\star$-diagram in \cite{duran2001pointed} (see also \cite{speranca2016pulling}).
		Consider the compact Lie group
		\begin{equation}
			\mathrm{Sp}(2) = \left\{\begin{pmatrix} a & c \\b & d\end{pmatrix}\in \mathrm{S}^7\times \mathrm{S}^7~ \Big| ~a\bar{b} + c\bar{d} = 0\right\},\label{eq:Sp2}
		\end{equation} 
		where $a,b,c,d\in \mathbb  H$ are quaternions with their usual conjugation, multiplication, and norm.
		The projection $\pi:\mathrm{Sp}(2)\to \mathrm{S}^7$ of an element to its first row defines a principal $\mathrm{S}^3$-bundle with principal action:
		\begin{equation}\label{eq:GMprincipalaction}
			\begin{pmatrix} 
				a & c \\
				b & d 
			\end{pmatrix}\bar q = \begin{pmatrix}
				a & c\overline{q}\\
				b & d\overline{q}
			\end{pmatrix}.
		\end{equation}
		Gromoll--Meyer \cite{gm} introduced the $\star$-action
		\begin{equation}\label{eq:GMstaraction}
			q \begin{pmatrix} 
				a & c \\
				b & d 
			\end{pmatrix} = \begin{pmatrix} 
				qa\overline{q} & qc \\
				qb\overline{q} & qd 
			\end{pmatrix},
		\end{equation}
		whose quotient is an exotic 7-sphere. It all fits in the following diagram
		\begin{equation}\label{eq:CDGMintro}
			\begin{xy}\xymatrix{& \mathrm{S}^3\ar@{..}[d]^{\bullet} & \\ \mathrm{S}^3\ar@{..}[r]^{\star} & \mathrm{Sp}(2)\ar[d]^{\pi}\ar[r]^{\pi'} &\Sigma^7_{GM}\\ &\mathrm{S}^7&}\end{xy}
		\end{equation} \demo
  \end{example}

  As the next example shows, a $\star$-diagram such as \eqref{eq:CDintro} does not always produce a different manifold.
  \begin{example}
      [Pairs of diffeomorphic manifolds via $\star$-diagrams]\label{ex:localmodelstar}
        Let $M$ be a smooth manifold with an effective smooth action by a compact Lie group $G$, which we denote by $\cdot$. Consider the product manifold $M\times G$ with the following $\star$-action
        \[g\star (x,g'):= (g\cdot x,gg'),~x\in M,~g,g'\in G.\]

        Let $\bullet$ be the following $G$-action on $M\times G$:
        \[g\bullet (x,g'):= (x,(g')g^{-1}),~x\in M,~g,g'\in G.\]
        Both $\bullet,~\star$ are free and commuting actions on $M\times G$. Orbit maps for such actions are, respectively, $\pi:M\times G\rightarrow M,~(x,g')\mapsto x,~\pi':M\times G\rightarrow M,~(x,g')\mapsto (g')^{-1}x$. We can build the corresponding $\star$-diagram
        	\begin{equation}\label{eq:CDcheeger}
			\begin{xy}\xymatrix{& G\ar@{..}[d]^{\bullet} & \\ G\ar@{..}[r]^{\star} & M\times G\ar[d]^{\pi}\ar[r]^{\pi'} &M\\ &M&}\end{xy}
		\end{equation}
    \demo
  \end{example}
  
  A new construction is given next, encompassing some cohomogeneity-one manifolds.
 \begin{example}[Cohomogeneity-one manifolds]\label{ex:cohomogeneityone}
 Another class of examples are the manifolds constructed in Grove--Ziller \cite{gz}. Given integers $p_+,q_+,p_-,q_- \equiv 1 \pmod 4$, \cite{gz} produces a cohomogeneity-one manifold $(P^{10}_{p_-,q_-,p_+,q_+},(\mathrm{S}^3)^3)$. We further assume that $\gcd(p_-,q_-)=\gcd(p_+,q_+)=1$. Then one can observe that the subactions of $\mathrm{S}^3_{\bullet}=\mathrm{S}^3\times\{1\}\times\{1\}$ and of $\mathrm{S}^3_{\star}=\{1\}\times \Delta \mathrm{S}^3$ are commuting and free, where $\Delta \mathrm{S}^3$ is the diagonal in $\mathrm{S}^3\times \mathrm{S}^3$. They fit in the diagram
		\begin{equation}\label{eq:CDGZ}
			\begin{xy}\xymatrix{& \mathrm{S}^3\ar@{..}[d]^{\bullet} & \\ \mathrm{S}^3\ar@{..}[r]^{\hspace{-18pt}\star} & P^{10}_{p_-,q_-,p_+,q_+}\ar[d]^{\pi}\ar[r]^{\pi'} &M'_{p_-,q_-,p_+,q_+}\\ &M_{p_-,q_-,p_+,q_+}&}\end{xy}
		\end{equation}
		Here, $M'_{p_-,q_-,p_+,q_+}$ is the $\mathrm{S}^3$-bundle over $\mathrm{S}^4$ classified by the transition function $\alpha:\mathrm{S}^3\to \mathrm{SO}(4)$ defined by $\alpha(x)v=x^kvx^l$, where $k=(p^2_--p^2_+)/8$ and $l=-(q^2_--q^2_+)/8$. \demo
	\end{example}
	
  As observed in \cite{SperancaCavenaghiPublished}, the actions $\bullet$ and $\star$ commute so that $\star$ descends to a non-trivial action on $M$, as well as $\bullet$ descends to a non-trivial action $M'$. Moreover, it is possible to regard $M$ and $M'$ with $G$-invariant Riemannian metrics $\mathsf{g}_M$ and $\mathsf{g}_{M'}$, respectively, such that the metric spaces $M/G$ and $M'/G$ are isometric. 
  
    \begin{lemma}[Corollary 5.2 in \cite{SperancaCavenaghiPublished}]\label{lem:totallygeodesicmetric}
     Let $M\leftarrow P\rightarrow M'$ shortly denote a $\star$-diagram such as \eqref{eq:CDintro} with structure group $G$. There exists a $G\times G$-invariant metric $\mathsf{g}_{\omega}$ on $P$ that induces $G$-invariant Riemannian metrics $\mathsf{g}_{M},~\mathsf{g}_{M'}$ on $M$, and $M'$, respectively, such that the metric spaces $M/G,~M'/G$ are isometric. Moreover, the $\bullet, \star$-action fibers on $P$ are totally geodesic.
 \end{lemma}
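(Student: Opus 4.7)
The plan is to construct $\mathsf{g}_\omega$ as a connection-type (``Kaluza–Klein'') metric adapted to the combined $G\times G$-symmetry on $P$. The three ingredients I would assemble are: a bi-invariant metric $\mathsf{g}_G$ on $G$, which exists by compactness of $G$; a $G\times G$-invariant horizontal distribution $\mathcal H_0$ on $P$ complementary to the vertical distribution $\mathcal V:=\mathcal V^\bullet+\mathcal V^\star$ of the combined action; and a Riemannian metric $\mathsf{g}_0$ on the orbit space $P/(G\times G)$, which will simultaneously represent $M/G$ and $M'/G$. The key observation is that $P/\bullet/\star=M/G$ and $P/\star/\bullet=M'/G$ are canonically identified with $P/(G\times G)$.

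I first obtain $\mathcal H_0$ by averaging any Riemannian metric on $P$ against a bi-invariant Haar measure of $G\times G$, producing a $G\times G$-invariant auxiliary metric $\tilde{\mathsf{g}}$, and taking the $\tilde{\mathsf{g}}$-orthogonal complement to $\mathcal V$. Fix any Riemannian metric $\mathsf{g}_0$ on $P/(G\times G)$. I then define $\mathsf{g}_\omega$ by declaring $\mathcal H_0\perp\mathcal V$, setting $\mathsf{g}_\omega|_{\mathcal H_0}$ to be the pullback of $\mathsf{g}_0$ via $P\to P/(G\times G)$, and setting $\mathsf{g}_\omega|_{\mathcal V_p}$, at each $p\in P$, to be the quotient of the bi-invariant metric on $\mathfrak g\oplus\mathfrak g$ by the isotropy Lie algebra of $p$ (the kernel of the infinitesimal-action map); bi-invariance of $\mathsf{g}_G\oplus\mathsf{g}_G$ ensures this quotient inner product is well-defined and smooth in $p$.

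By construction $\mathsf{g}_\omega$ is $G\times G$-invariant, so it descends through $\pi$ and $\pi'$ to $G$-invariant metrics $\mathsf{g}_M$ on $M$ and $\mathsf{g}_{M'}$ on $M'$ that turn both maps into Riemannian submersions. A further quotient by the residual $G$-action recovers $(P/(G\times G),\mathsf{g}_0)$ on both sides by the tower property of Riemannian submersions, yielding the isometry $M/G\cong M'/G$. For the totally geodesic claim I work inside each $G\times G$-orbit: with the induced metric it is a homogeneous space whose metric is the push-down of the bi-invariant metric on $G\times G$, so the sub-orbits of the normal subgroups $G\times\{e\}$ and $\{e\}\times G$ — namely the $\bullet$- and $\star$-fibers — are totally geodesic there. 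O'Neill's formulae for the outer submersion $(P,\mathsf{g}_\omega)\to(P/(G\times G),\mathsf{g}_0)$ then show the $G\times G$-orbits themselves are totally geodesic in $P$, because $\mathcal H_0$ is invariant and the fiber metric is essentially bi-invariant; transitivity of the totally-geodesic property yields the claim for the individual $\bullet$- and $\star$-fibers.

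The main obstacle I anticipate is the behavior at points where the $G\times G$-action fails to be free, i.e., where $\mathcal V^\bullet\cap\mathcal V^\star\neq 0$ and the rank of $\mathcal V$ drops. There the naive prescription ``$\mathcal V^\bullet\perp\mathcal V^\star$'' is inconsistent, so the metric on $\mathcal V$ must be defined as the quotient object described above. Verifying smoothness of $\mathsf{g}_\omega$ and of $\mathcal H_0$ across strata of different orbit types — via a slice theorem for the combined $G\times G$-action and a careful analysis of how the isotropy Lie algebras vary — is the delicate point where the substantive work of \cite{SperancaCavenaghiPublished} is concentrated.
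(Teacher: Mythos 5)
Your construction founders precisely on the point you flag at the end, and that point is not where the substantive work of the cited reference lies --- it is a point the reference's proof avoids entirely. In a $\star$-diagram the actions $\bullet$ and $\star$ are each free, but the combined $G\times G$-action on $P$ is generally not: its isotropy group at $p$ is (the graph of a homomorphism on, hence isomorphic to) the isotropy group $G_{\pi(p)}$ of the descended $\star$-action on $M$, and making that descended action nontrivial is the whole point of the construction. In the Gromoll--Meyer diagram, for instance, the descended action on $\mathrm{S}^7$ has orbit types ranging from principal orbits to fixed points. Consequently: (i) $\mathcal V=\mathcal V^{\bullet}+\mathcal V^{\star}$ has non-constant rank, so neither it nor a putative complement $\mathcal H_0$ is a distribution, and ``averaging and taking the orthogonal complement'' does not produce one; (ii) $P/(G\times G)=M/G$ is not a smooth manifold in these examples, so there is no Riemannian metric $\mathsf g_0$ to pull back, and the ``tower of Riemannian submersions'' you invoke for the isometry $M/G\cong M'/G$ does not exist; (iii) the fiberwise prescription of $\mathsf g_\omega|_{\mathcal V_p}$ as the quotient of a fixed bi-invariant inner product on $\mathfrak g\oplus\mathfrak g$ by the isotropy algebra cannot extend to a smooth metric across strata --- already for a circle acting by rotations near a fixed point this recipe assigns every nearby orbit the same length, which no smooth metric does. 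A slice-theorem analysis cannot repair (iii): the prescription itself is wrong near singular orbits, not merely unverified there.

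The actual proof goes the other way around and never touches the singular quotient. One uses only the \emph{free} action: choose a principal connection $1$-form $\omega$ for $\pi:P\to M$ and average it so that it is also invariant under the $\star$-action; choose an arbitrary $G$-invariant metric $\mathsf g_M$ on the smooth manifold $M$ and a bi-invariant metric $Q$ on $G$; and set $\mathsf g_\omega=\pi^*\mathsf g_M+Q\circ\omega\otimes\omega$. Smoothness is automatic, $G\times G$-invariance follows from the invariance of $\omega$ and of $\mathsf g_M$, and $\mathsf g_{M'}$ is obtained by pushing down along $\pi'$. The isometry $M/G\cong M'/G$ is then an isometry of \emph{metric spaces}, obtained by identifying the orthogonal complement of the $G\times G$-orbit through each $p\in P$ with the normal spaces to the orbits at $\pi(p)$ and $\pi'(p)$ and transporting horizontal geodesics; no Riemannian structure on the quotient is needed. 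If you want to salvage your outline, replace ``a metric on $P/(G\times G)$'' by ``a $G$-invariant metric on $M$'' and replace the choice of $\mathcal H_0$ by the choice of a $\star$-invariant principal connection for the free $\bullet$-bundle.
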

  \begin{proof}[Sketch of the Proof]
  Let $\omega : TP \to \mathfrak g$ be a connection $1$-form associated with $\pi:P\rightarrow M$. Proposition 5.1 in \cite{SperancaCavenaghiPublished} teaches us we can assume that for any $r\in G$ it holds $\omega_{rp}(rX)=\omega_p(X)$ for any $p\in P,~X\in T_pP$. Let $Q$ be a bi-invariant metric on $G$ and $\mathsf{g}_M$ be any $G$-invariant Riemannian metric on $M$. Regard $P$ with the  Kaluza--Klein metric $\mathsf{g}_{\omega} = \pi^*\mathsf{g}_M + Q\circ \omega\otimes\omega$. Since $\mathsf{g}_{\omega}$ is $G\times G$-invariant it yields a $G$-invariant Riemannian metric $\mathsf{g}_{M'}$ on $M'$.
  
  One straightforwardly checks that
	\[\mathcal H'' := (T(G\times G)p)^{\perp_{\mathsf{g}_{\omega}}} \cong (TG\pi(p))^{\perp_{\mathsf{g}_M}} \cong (TG\pi'(p))^{\perp_{\mathsf{g}_{M'}}},~\forall p\in P.\]
 Then, any geodesic orthogonal to an orbit of the $\star$-action on $M$ can be mapped (through horizontal lifting from $M$ and $\pi'$-projection) to a geodesic orthogonal to an orbit of the $\bullet$-action on $M'$ with the same length. The orbits are totally geodesic on $P$ (for both actions) because Kaluza--Klein metrics are connection metrics.
\end{proof}

Any smooth $G\times G$-invariant function $\phi : P \to \mathbb{R}$ gives rise to smooth and $G$-invariant functions on both $M$ and $M'$ via composing $\phi$ in the right with $\pi,~\pi'$, respectively. In fact, any smooth invariant functions on $M,~M'$ arise in this fashion. Let $\mathsf{g}_P$ denote an arbitrary $G\times G$-invariant metric on $P$, and let $\mathsf{g}_M,$ and $\mathsf{g}_{M'}$ be invariant Riemannian metrics on $M,~M'$, respectively, such that the projections $\pi,~\pi'$ define Riemannian submersions. Then, the \emph{horizontal Laplacians} on  $M$ and $M'$ are related to the Laplacian in $P$ as follows, see \cite[Section 2.1.4, p.53]{gw}:
	\begin{align}\label{eq:system}
	 -\Delta_{\mathsf{g}_P}\phi &= -\Delta_{\mathsf{g}_M}\phi + \mathrm{d}\phi(H^{\pi}),\\
	 -\Delta_{\mathsf{g}_P}\phi &= -\Delta_{\mathsf{g}_{M'}}\phi + \mathrm{d}\phi(H^{\pi'}),
	\end{align}
	where $H^{\pi}$ stands for the mean curvature vector of the fibers according to the $\bullet$-action on $P$ and $H^{\pi'}$ to the mean curvature vector of the fibers according to $\star$. Since the ring isomorphisms $C^{\infty}_{G\times G}(P)\cong C^{\infty}_G(M)\cong C^{\infty}_G(M')$ hold, one can identify the Sobolev spaces
 \[W^{1,2}_{G\times G}(P)\cong W^{1,2}_G(M)\cong W^{1,2}_G(M').\]
 Consequently,
 \[\mathcal H_{G\times G}(P)\cong \mathcal H_{G}(M)\cong \mathcal H_G(M').\]
	\begin{problem}\label{prob:thetwo}
	Is there $\phi \in C^{\infty}_{G\times G}(P)$ simultaneously a smooth solution to the invariant eigenvalue problems \[\Delta_{\mathsf{g}_{M}}\phi = -\lambda \phi, ~\Delta_{\mathsf{g}_{M'}}\phi = -\lambda' \phi\]
 for some $\lambda, \lambda'\in \mathbb  R$? If so, is it true that $\lambda=\lambda'$? Moreover, do the basic spectra $\mathrm{Spec}(-\Delta_{\mathsf{g}_M})$ and $\mathrm{Spec}(-\Delta_{\mathsf{g}_{M'}})$ coincide?
	\end{problem}

Related to Problem \ref{prob:thetwo} we prove:
\begin{proposition}\label{prop:joint}
   Let $(M,\mathsf{g}_M)\leftarrow (P,\mathsf{g}_P)\rightarrow (M',\mathsf{g}_{M'})$ shortly denote a $\star$-diagram where $\mathsf{g}_{M},~\mathsf{g}_P,~\mathsf{g}_{M'}$ are as in Lemma \ref{lem:totallygeodesicmetric}. Let $\Phi$ be the collection of $G$-invariant eigenfunctions of $-\Delta_{\mathsf{g}_M}$ in $(M,\mathsf{g}_M)$. Then for each $\phi \in \Phi$ we have that $-\Delta_{\mathsf{g}_{M'}}\phi=\lambda' \phi$ with $\lambda'=\lambda$ where $-\Delta_{\mathsf{g}_{M}}\phi=\lambda \phi$. Moreover, the roles of $(M,\mathsf{g}_M)$ and $(M',\mathsf{g}_{M'})$ can be interchanged, in the sense that we can start with $\Phi$ as a set of invariant eigenfunctions in $(M, \mathsf{g}_{M'})$ for $-\Delta_{\mathsf{g}_{M'}}$.
\end{proposition}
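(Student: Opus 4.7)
The plan is to exploit the totally geodesic property of the $\bullet$- and $\star$-fibers guaranteed by Lemma \ref{lem:totallygeodesicmetric}, together with the system \eqref{eq:system}, to show that the horizontal Laplacians $-\Delta_{\mathsf{g}_M}$ and $-\Delta_{\mathsf{g}_{M'}}$ literally agree as operators once basic functions on $M$, $M'$ and $G\times G$-invariant functions on $P$ are identified through the ring isomorphism $C^{\infty}_G(M)\cong C^{\infty}_{G\times G}(P)\cong C^{\infty}_G(M')$.

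First, I would fix $\phi\in\Phi$, a basic eigenfunction with $-\Delta_{\mathsf{g}_M}\phi=\lambda\phi$, and lift it to $\tilde\phi=\phi\circ\pi\in C^{\infty}_{G\times G}(P)$. Because the $\star$-action descends to a $G$-action on $M$ and $\phi$ is $G$-invariant with respect to this descended action, $\tilde\phi$ is simultaneously invariant under both the $\bullet$- and the $\star$-actions on $P$. Hence $\tilde\phi$ also arises as the pullback $\phi^*\circ\pi'$ of a unique basic function $\phi^*\in C^{\infty}_G(M')$; this is precisely the identification provided just before Problem \ref{prob:thetwo}.

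Next, the key geometric input: by Lemma \ref{lem:totallygeodesicmetric}, the fibers of both $\pi$ and $\pi'$ are totally geodesic in $(P,\mathsf{g}_P)$, so their second fundamental forms vanish, and in particular the mean curvature vectors satisfy $H^{\pi}=H^{\pi'}=0$. Plugging this into the two identities of \eqref{eq:system} applied to $\tilde\phi$ gives
\[
-\Delta_{\mathsf{g}_P}\tilde\phi \;=\; (-\Delta_{\mathsf{g}_M}\phi)\circ\pi \;=\; (-\Delta_{\mathsf{g}_{M'}}\phi^*)\circ\pi'.
\]
Since $(-\Delta_{\mathsf{g}_M}\phi)\circ\pi=\lambda\,\tilde\phi=\lambda\,\phi^*\circ\pi'$ and $\pi'$ is a submersion, one concludes $-\Delta_{\mathsf{g}_{M'}}\phi^*=\lambda\,\phi^*$, so $\phi^*$ is a basic eigenfunction of $-\Delta_{\mathsf{g}_{M'}}$ with the same eigenvalue $\lambda$. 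Interchanging the roles of $M$ and $M'$ in the identical argument yields the last sentence of the statement.

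The only step requiring care is verifying that totally geodesic fibers really force the mean curvature terms in \eqref{eq:system} to vanish (a standard fact: a totally geodesic submanifold has vanishing second fundamental form, hence vanishing mean curvature), and tracking that the function-space identification used in the excerpt is exactly the one induced by $\pi^*$ and $(\pi')^*$; neither presents a genuine obstacle, so the proposition reduces essentially to a direct consequence of Lemma \ref{lem:totallygeodesicmetric} combined with the O'Neill-type formula \eqref{eq:system}.
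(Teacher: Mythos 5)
Your route is genuinely different from the paper's and, if it closed, would be both shorter and stronger: once $H^{\pi}=H^{\pi'}=0$, the two lines of \eqref{eq:system} make $-\Delta_{\mathsf{g}_M}$ and $-\Delta_{\mathsf{g}_{M'}}$ literally the same operator under the identification $C^{\infty}_G(M)\cong C^{\infty}_{G\times G}(P)\cong C^{\infty}_G(M')$, which yields the proposition \emph{and} the subsequent isospectrality statement in one stroke. The paper does something else entirely: it treats $\Phi$ as a Hilbert basis of $\mathcal H_G(M)$, regards it as a Schauder basis of $\mathcal H_G(M')$, and verifies the weak eigenvalue identity $\int\left[\mathsf{g}_{M'}(\nabla'\phi_j^n,\nabla'\phi_k)-\lambda_n\phi_j^n\phi_k\right]=0$ using that gradients of invariant functions are horizontal, that the orthogonal complements of corresponding orbits in $(M,\mathsf{g}_M)$ and $(M',\mathsf{g}_{M'})$ are isometric, that $M/G\cong M'/G$, and Fubini; it never invokes the totally geodesic clause of Lemma \ref{lem:totallygeodesicmetric}.

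That clause is exactly where your argument is exposed. For the $\bullet$-fibers of the Kaluza--Klein metric $\pi^*\mathsf{g}_M+Q\circ\omega\otimes\omega$, total geodesy is the standard connection-metric fact and $H^{\pi}=0$ is safe. But the $\star$-orbits are merely orbits of an isometric action, and by the formula $-H=\nabla\log\mathrm{vol}(Gp)$ invoked later in the paper, $H^{\pi'}=0$ forces all $\star$-orbits to have equal volume. This is not automatic: in Example \ref{ex:localmodelstar} the product metric $\mathsf{g}_M\oplus Q$ on $M\times G$ is of the lemma's form for a connection satisfying the stated $\star$-invariance, yet the $\star$-orbit through $(x,e)$ is the graph $g\mapsto(g\cdot x,g)$, whose induced volume depends on the geometry of $G\cdot x\subset M$ and hence varies with $x$; such orbits are neither minimal nor totally geodesic in general. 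Tellingly, the paper's next proposition does not assert $H^{\pi}=H^{\pi'}=0$ but computes $H^{\pi}-H^{\pi'}=\nabla\log\bigl(\mathrm{vol}(G^{\star}p)/\mathrm{vol}(G^{\bullet}p)\bigr)$ and argues only that this difference annihilates $\mathrm d\phi$. So your proof is complete only modulo a verification that the specific connection of Lemma \ref{lem:totallygeodesicmetric} really makes the $\star$-orbit volume constant (equivalently $H^{\pi'}=0$); absent that, you must fall back on the weak-formulation argument the paper uses.
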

\begin{proof}
    Let $\Phi$ be a set of invariant eigenfunctions for $-\Delta_{\mathsf{g}_M}$. From the compactness of $M$ and standard elliptic theory, $\Phi$ constitutes an orthonormal (Hilbert) basis to $\mathcal H_G(M)$ and hence consists in a Schauder basis to $\mathcal H_{G\times G}(P)$ \cite[p.16]{chavel}. In this manner, $\Phi$ descends to a Schauder basis to ${\mathcal H}_G(M')$. We now pose the following variant of Problem \ref{prob:invarianteigenvalue}. To find $\phi \in \Phi$ and $\lambda' \in \mathbb  R$ such that
    \[\lambda'\int_{M'}\phi v =\int_{M'}\mathsf{g}'(\nabla'\phi,\nabla v),~\forall v\in {\mathcal H}_G(M').\]
    To solve this problem, we consider the functional $J_{\lambda'}(u):=\int_{M'}|\nabla'u|^2-\lambda'\int_{M'}u^2$ with domain in ${\mathcal H}_G(M')$ and show that for each $n\in \mathbb  N$, picking $\lambda'_n=\lambda_n\in \mathbb  R$ for $\lambda_n\in \mathrm{Spec}_G(-\Delta_{\mathsf{g}})$, then $\left\{\phi_j^n\right\}_{j=1}^{\mathrm{dim} \ker(-\Delta_{\mathsf{g}}-\lambda_n)}:=\Phi\cap \ker(-\Delta_{\mathsf{g}}-\lambda_n)$ satisfies $\mathrm{d}J_{\lambda'_n}(\phi_j^n)(v)=0~\forall j\in \{1,\ldots,\mathrm{dim} \ker(-\Delta_{\mathsf{g}}-\lambda_n)\},~\forall v \in {\mathcal H}_G(M')$. Using that $\Phi$ is a Schauder basis to ${\mathcal H}_G(M')$ it suffices to show for each $n\in \mathbb  N$ we have
    \[\int_{M'}\left[\mathsf{g}_{M'}(\nabla'\phi_j^n,\nabla'\phi_k)-\lambda_n\phi_j^n\phi_k\right]=0
    ,~\forall k\in \mathbb  N,~\forall j.\]

   Note that for each $p \in P$ we have for every $j, n, k$ that
    \begin{align*}&\nabla'\phi_k(\pi'(p))\perp T_{\pi'(p)}G\pi'(p),~\nabla'\phi_j^n(\pi'(p))\perp T_{\pi'(p)}G\pi'(p),\\&\nabla\phi_k(\pi(p))\perp T_{\pi(p)}G\pi(p),~\nabla\phi_j^n(\pi(p))\perp T_{\pi(p)}G\pi(p).\end{align*} As appearing in the proof of Lemma \ref{lem:totallygeodesicmetric}, the orthogonal spaces to each $G$-orbit in $(M,\mathsf{g}_{M})$ and $(M',\mathsf{g}_{M'})$ are isometric. Therefore, $\mathsf{g}_{M'}(\nabla'\phi_j^n,\nabla'\phi_k)=\mathsf{g}_M(\nabla\phi_j^n,\nabla\phi_k)$. Finally, using that $M/G$ and $M'/G$ are isometric, we have
    \begin{align*}
        \int_{M'/G}\left[\mathsf{g}_{M'}(\nabla'\phi_j^n,\nabla'\phi_k)-\lambda_n\phi_j^n\phi_k\right]&=\int_{M/G}\left[\mathsf{g}_{M}(\nabla \phi_j^n,\nabla\phi_k)-\lambda_n\phi_j^n\phi_k\right].
    \end{align*}
    Since $\Phi$ collects the solutions of Problem \ref{prob:invarianteigenvalue}, the version of the Fubini theorem appearing in \cite[Satz 1, p.210]{fubinirefenrece} can be applied to conclude the desired result. \qedhere
\end{proof}

\begin{definition}
    Let $M\leftarrow P\rightarrow M'$ shortly denote a $\star$-diagram. To a set $\Phi$ solving Problem \ref{prob:invarianteigenvalue} in $(M,\mathsf{g}_M)$ and $(M',\mathsf{g}_{M'})$ simultaneously we name a \emph{joint invariant eigenfunction set}.
\end{definition}

Our next result shows that when existing a joint invariant eigenfunction set $\Phi$ necessarily $\mathsf{g}_{M}, ~\mathsf{g}_{M'}$ are isospectral. 

\begin{proposition}
    Let $M\leftarrow P\rightarrow M'$ shortly denote a $\star$-diagram such as \eqref{eq:CDintro} with structure group $G$ where $M, P$ and $M'$ are closed and connected. Let $\mathsf{g}_M,~\mathsf{g}_{M'}$ be $G$-invariant metrics on $M$, and $M'$, respectively, induced by a $G\times G$-invariant metric $\mathsf{g}_P$ on $P$. Assume an existing joint invariant eigenfunction set $\Phi$. Then the $G$-invariant spectra of $-\Delta_{\mathsf{g}_M}$ and $-\Delta_{\mathsf{g}_{M'}}$ coincide.
\end{proposition}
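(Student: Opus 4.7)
The plan is to promote Proposition \ref{prop:joint} from an eigenvalue-preservation statement about individual joint eigenfunctions to a statement about the whole invariant spectrum, by exploiting the completeness of invariant eigenfunctions in $\mathcal{H}_G(M)$ and $\mathcal{H}_G(M')$.

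First, I will fix the Hilbert space identifications. The ring isomorphisms $C^\infty_G(M)\cong C^\infty_{G\times G}(P)\cong C^\infty_G(M')$ recorded before Problem \ref{prob:thetwo}, combined with the isometry $M/G\cong M'/G$ from Lemma \ref{lem:totallygeodesicmetric}, give a unitary identification $\mathcal{H}_G(M)\cong \mathcal{H}_G(M')$ (up to a constant factor coming from the $G$-orbit volumes, which is harmless for orthonormality). Under this identification, the right-hand side of the weak eigenvalue equation \eqref{eq:weaksolution} matches on both sides, and the proof of Proposition \ref{prop:joint} shows that the Dirichlet forms $\int\langle\nabla\phi,\nabla\psi\rangle\, d\mu_{\mathsf{g}_M}$ and $\int\langle\nabla'\phi,\nabla'\psi\rangle\, d\mu_{\mathsf{g}_{M'}}$ also agree on $G$-invariant functions.

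Next, I will invoke standard elliptic theory on the compact manifold $M$: the operator $-\Delta_{\mathsf{g}_M}$ restricted to the invariant subspace $\mathcal{H}_G(M)$ is self-adjoint with compact resolvent, so its $G$-invariant eigenfunctions form a Hilbert basis of $\mathcal{H}_G(M)$. By Proposition \ref{prop:joint} each such eigenfunction $\phi$ (with $-\Delta_{\mathsf{g}_M}\phi=\lambda\phi$) also satisfies $-\Delta_{\mathsf{g}_{M'}}\phi=\lambda\phi$; using the interchangeability clause in the same proposition, the converse holds as well. Consequently, a joint invariant eigenfunction set $\Phi$ (which by hypothesis collects the simultaneous solutions of Problem \ref{prob:invarianteigenvalue} on both sides) is nothing but the common family of invariant eigenfunctions, and it is a Hilbert basis of both $\mathcal{H}_G(M)$ and $\mathcal{H}_G(M')$.

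From this the equality of spectra follows immediately. For each $\lambda\in \mathrm{Spec}_G(-\Delta_{\mathsf{g}_M})$, the finite-dimensional eigenspace $\ker(-\Delta_{\mathsf{g}_M}-\lambda)$ is spanned by elements of $\Phi$, each of which lies in $\ker(-\Delta_{\mathsf{g}_{M'}}-\lambda)$ with the same eigenvalue by Proposition \ref{prop:joint}; hence $\lambda$ belongs to $\mathrm{Spec}_G(-\Delta_{\mathsf{g}_{M'}})$ with at least the same multiplicity. Swapping the roles of $M$ and $M'$ gives the reverse inclusion and matching multiplicities, proving $\mathrm{Spec}_G(-\Delta_{\mathsf{g}_M})=\mathrm{Spec}_G(-\Delta_{\mathsf{g}_{M'}})$. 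The only delicate point — and the one to argue carefully — is that the identification $\mathcal{H}_G(M)\cong \mathcal{H}_G(M')$ is genuinely an isometry of Hilbert spaces (so ``basis'' transfers); this is precisely the Fubini-type computation at the end of the proof of Proposition \ref{prop:joint}, which reduces both $L^2$ integrals to the common quotient $M/G\cong M'/G$ by integrating along the $\bullet$- and $\star$-fibers of the Kaluza--Klein metric $\mathsf{g}_\omega$ on $P$, which are totally geodesic of constant volume.
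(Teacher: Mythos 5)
There is a genuine gap: your argument leans entirely on Proposition \ref{prop:joint}, but that proposition (and the facts you extract from its proof --- the isometry $M/G\cong M'/G$, the agreement of the Dirichlet forms, the matching of horizontal distributions) is established only for the special Kaluza--Klein metrics $\mathsf{g}_\omega$ constructed in Lemma \ref{lem:totallygeodesicmetric}. The present proposition is stated for an \emph{arbitrary} $G\times G$-invariant metric $\mathsf{g}_P$ on $P$, for which the orbits need not be totally geodesic and the quotients $M/G$, $M'/G$ need not be isometric; the hypothesis that a joint invariant eigenfunction set $\Phi$ exists is imposed precisely because it is not automatic in this generality. The substantive point you must prove --- that a single $\phi$ which is an eigenfunction for both $-\Delta_{\mathsf{g}_M}$ (with eigenvalue $\lambda$) and $-\Delta_{\mathsf{g}_{M'}}$ (with eigenvalue $\lambda'$) necessarily has $\lambda=\lambda'$ --- is asserted by citation rather than argued, so the proof begs the question on its key step.

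The paper closes this gap differently: subtracting the two identities in \eqref{eq:system} gives, for a joint eigenfunction $\phi$ lifted to $P$,
\[-\lambda\phi+\mathrm{d}\phi\bigl(H^{\pi}-H^{\pi'}\bigr)=-\lambda'\phi,\]
and then uses the mean-curvature formula $-H=\nabla\log\mathrm{vol}(Gp)$ together with the observation that $p\mapsto \mathrm{vol}(G^{\star}p)/\mathrm{vol}(G^{\bullet}p)$ is constant on $P$ (via principal-bundle trivializations, $G\times G$-invariance of $\mathsf{g}_P$, and Fubini) to conclude $H^{\pi}-H^{\pi'}=\nabla\log(\text{const})=0$, whence $\lambda=\lambda'$. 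Your completeness bookkeeping at the end (that $\Phi$ is a Hilbert basis on both sides, so equal eigenvalues on joint eigenfunctions upgrade to equality of spectra) is the right way to finish, and is in fact more explicit than the paper on that final step; but without the mean-curvature argument, or some replacement for it valid for general $\mathsf{g}_P$, the proof does not go through.
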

\begin{proof}
Observe that the function $p\mapsto\frac{\mathrm{vol}(G^{\star}p)}{\mathrm{vol}(G^{\bullet}p)}$ is constant on $P$. By connectedness, this boils down to showing that the functions $p\mapsto \mathrm{vol}(G^{\star}p)$ and $p\mapsto \mathrm{vol}(G^{\bullet}p)$ are locally constant on $P$. Now, $P$ is a principal $G$-bundle for both $G$-actions $\star$ and $\bullet$, so one can relate sufficiently close fibers of the star action, as well as sufficiently close fibers of the dot action, using principal $G$-bundle trivializations. Combining this with the fact that the metric on $P$ is $G\times G$-invariant, and using Fubini's theorem, one can show that the volume of these sufficiently close fibers of $P$ is indeed the same, say equal to $V'$ for the $\star$-action and equal to $V$ for the $\bullet$ action.

On the other hand, Equation \eqref{eq:system} ensures that any $\phi\in C^{\infty}_{G\times G}(P)$ which descends to solutions to the eigenvalue problems on $M$ and $M'$, respectively, can be characterized by
	\[ -\lambda \phi + \mathrm{d}\phi(H^{\pi}-H^{\pi'}) = -\lambda' \phi.\]
	Thus, $\lambda' = \lambda$ holds if, and only if, $H^{\pi}-H^{\pi'}\in \ker \mathrm{d}\phi$. Fix $p\in P$. The mean curvature vector along the $G$-orbit through $p$ is given by $-H = \nabla \log \mathrm{vol}(Gp)$ -- \cite[Lemma 5.2]{alexandrino2016mean}. Hence,
	\[H^{\pi} - H^{\pi'} = \nabla \log \left(\frac{\mathrm{vol}(G^{\star}p)}{\mathrm{vol}(G^{\bullet}p)}\right),\]
	so
	\[\mathrm{d}\phi(H^{\pi} - H^{\pi'}) = \langle \nabla \phi, \nabla \log \left(\frac{\mathrm{vol}(G^{\star}p)}{\mathrm{vol}(G^{\bullet}p)}\right)\rangle=\langle \nabla \phi, \nabla \log \dfrac{V'}{V}\rangle=0.\]
 Therefore,
\[-\lambda \phi = -\lambda' \phi.\qedhere\]
 \end{proof}
	
  An appearing question is whether we can produce invariant metrics on $M$ and $M'$, which are invariant and not isospectral. We obtained the following, to be proved in Section \ref{sec:basicspectra}.
\begin{theorem}\label{thm:isospectralornot}
  For any $\star$-diagram $M\leftarrow P\rightarrow M'$ with compact connected structure group and  $M, P$ and $M'$ being closed and connected, there exists invariant metrics $\mathsf{g}$ on $M$ and $\mathsf{g}'$ on $M'$ so that $(M,\mathsf{g}),~(M',\mathsf{g}')$ have different basic spectrum.
\end{theorem}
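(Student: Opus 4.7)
The plan is to use a rescaling argument, exploiting the fact that homothetic scaling of a Riemannian metric uniformly rescales the spectrum of the Laplacian. This completely bypasses the finer $\star$-diagram structure and requires only that $M$ and $M'$ are closed connected manifolds admitting effective isometric actions of the same compact connected Lie group $G$.

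First I would pick an arbitrary $G$-invariant metric $\mathsf{g}$ on $M$ and an arbitrary $G$-invariant metric $\mathsf{g}'_0$ on $M'$, both obtained by Haar-averaging any Riemannian metric. Write their basic spectra as $\mathrm{Spec}_G(-\Delta_{\mathsf{g}})=\{\lambda_k\}_{k\geq 1}$ and $\mathrm{Spec}_G(-\Delta_{\mathsf{g}'_0})=\{\mu_k\}_{k\geq 1}$, listed in increasing order with multiplicities. By the definition recalled in the introduction, each list starts with a strictly positive first eigenvalue (because $\mathcal{H}_G$ excludes constants) and tends to $+\infty$.

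Next I would introduce the one-parameter family of invariant metrics $\mathsf{g}'_t := t\mathsf{g}'_0$ on $M'$ for $t>0$. Since multiplication by a positive constant preserves $G$-invariance, each $\mathsf{g}'_t$ is an admissible competitor. A direct computation gives $\Delta_{\mathsf{g}'_t} = t^{-1}\Delta_{\mathsf{g}'_0}$, so the basic spectrum of $(M',\mathsf{g}'_t)$ is exactly $\{\mu_k/t\}_{k\geq 1}$. For $\{\lambda_k\}$ and $\{\mu_k/t\}$ to coincide as increasing sequences (equivalently, as sets with multiplicities), matching the first entries forces $t=\mu_1/\lambda_1$, so equality can occur for at most this single value of $t$. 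Choosing any $t>0$ different from $\mu_1/\lambda_1$ and setting $\mathsf{g}' := \mathsf{g}'_t$ produces a pair of invariant metrics whose basic spectra disagree, and the proof is complete.

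There is essentially no obstacle beyond verifying the two elementary facts just used: the scaling law of $\Delta$, and the strict positivity of the first basic eigenvalues (which follows from compactness, connectedness, and the mean-zero condition defining $\mathcal{H}_G$). The substance of the statement lies not in its difficulty but in its complementarity with Proposition \ref{prop:joint}: whereas invariant metrics induced from a common $G\times G$-invariant Kaluza--Klein metric on $P$ are automatically isospectral on the invariant part, freely chosen invariant metrics on $M$ and $M'$ are almost never so, and the rescaling argument makes this rigorous.
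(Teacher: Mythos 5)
Your argument is correct as far as the literal statement goes, but it takes a genuinely different and much more elementary route than the paper. You fix any invariant $\mathsf{g}$ on $M$ and any invariant $\mathsf{g}'_0$ on $M'$ and exploit the scaling law $\Delta_{t\mathsf{g}'_0}=t^{-1}\Delta_{\mathsf{g}'_0}$: since homotheties preserve $G$-invariance and rescale the basic spectrum uniformly, equality of $\{\lambda_k\}$ and $\{\mu_k/t\}$ pins down at most one value of $t$, and any other $t$ gives a non-isospectral pair. The paper instead keeps $\mathsf{g}_M$ fixed and replaces the Kaluza--Klein metric $\mathsf{g}_\omega$ of Lemma~\ref{lem:totallygeodesicmetric} by the vertically warped metric $\mathsf{g}_u=\pi^*\mathsf{g}_M+e^{2u}Q\circ\omega\otimes\omega$, with $u$ a first basic eigenfunction, and shows via the mean-curvature identity and a variational estimate that the induced metric $\mathsf{g}'$ on $M'$ has $\lambda_1(\mathsf{g}')\neq\lambda_1(\mathsf{g}_{M'})$ after scaling $u$. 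What the paper's construction buys, and yours does not, is that the non-isospectral pair $(\mathsf{g}_M,\mathsf{g}')$ is still induced by a \emph{single} $G\times G$-invariant metric on $P$; this is the meaningful counterpoint to Proposition~\ref{prop:joint}, and it cannot be reached by your rescaling, since scaling a common metric on $P$ rescales both quotient metrics simultaneously and therefore preserves equality of the basic spectra. If the theorem is read purely as stated, however, your proof suffices.

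Two small caveats, both shared with the paper's own proof. First, you need the basic spectra to be nonempty, i.e.\ the $G$-action to have positive cohomogeneity; if $G$ acts transitively the basic spectrum is empty for every invariant metric and the statement is vacuously false, so this hypothesis is implicit throughout. Second, when you say equality of the spectra ``forces $t=\mu_1/\lambda_1$,'' you are comparing the spectra as sets (or increasing sequences); since the first basic eigenvalues are strictly positive, matching the minima is indeed enough, so this step is fine.
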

The combination of Proposition \ref{prop:joint} with Theorem \ref{thm:isospectralornot} and some explicit realization of exotic manifolds equivariantly related to their classical counterpart allow us to show
 	\begin{theorem}\label{thm:spectraintro}
The following pair of homeomorphic but not diffeomorphic manifolds admit Riemannian metrics invariant by the same group of isometries that can be chosen admitting the same basic spectrum or not:
    \begin{enumerate}
        \item $\mathrm{S}^7,~\#^k\Sigma_{GM}^7$ where $\Sigma_{GM}^7$ is the Gromoll--Meyer exotic sphere (Example \ref{ex:gromollmeyer}), $k\in\mathbb  N$ and $\#$ denote the connected sum,
        \item $\mathrm{S}^8,~\Sigma^8$ where $\Sigma^8$ is the only $8$-dimension exotic sphere,
        \item $\mathrm{S}^{10},~\Sigma^{10}$ where $\Sigma^{10}$ is a generator of the index two homotopy subgroup of $10$-dimension homotopy spheres that bound spin manifolds,
        \item $\mathrm{S}^{4n+1},~\Sigma^{4n+1}$ where $\Sigma^{4n+1}$ are the known \emph{Kervaire spheres},
        \item the pair of total spaces of the bundles appearing in Example \ref{ex:cohomogeneityone},
        \item the classical and exotic realization of the manifolds appearing in Example \ref{ex:moreconnectedsums}.
    \end{enumerate}
	\end{theorem}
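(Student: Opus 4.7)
The plan is to reduce Theorem~\ref{thm:spectraintro} to two already-prepared ingredients: Proposition~\ref{prop:joint} (which, combined with Lemma~\ref{lem:totallygeodesicmetric}, produces pairs of invariant metrics with identical basic spectrum) and Theorem~\ref{thm:isospectralornot} (which produces, on any $\star$-diagram, invariant metrics whose basic spectra differ). Once both directions are in hand, the content of the theorem becomes a purely geometric verification: that each of the six listed pairs $(M, M')$ fits as the outer vertices of a $\star$-diagram $M \leftarrow P \rightarrow M'$ with a common compact connected structure group $G$ acting freely on $P$ via two commuting principal actions.

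Granting such a diagram, the dichotomy is established uniformly as follows. For the \emph{same basic spectrum} conclusion: fix any $G$-invariant metric $\mathsf{g}_M$ on $M$; by Lemma~\ref{lem:totallygeodesicmetric} it extends to a $G\times G$-invariant Kaluza--Klein metric on $P$ which descends to a $G$-invariant metric $\mathsf{g}_{M'}$ on $M'$. Proposition~\ref{prop:joint} then guarantees that each basic eigenfunction of $-\Delta_{\mathsf{g}_M}$ is also a basic eigenfunction of $-\Delta_{\mathsf{g}_{M'}}$ with the same eigenvalue, and since these eigenfunctions form Hilbert bases of $\mathcal{H}_G(M) \cong \mathcal{H}_{G\times G}(P) \cong \mathcal{H}_G(M')$, the two basic spectra agree. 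For the \emph{distinct basic spectra} conclusion, Theorem~\ref{thm:isospectralornot} directly supplies invariant metrics on $M$ and $M'$ whose basic spectra differ.

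The case-by-case production of the required $\star$-diagrams will proceed as follows. Item~(1) with $k=1$ is the Gromoll--Meyer diagram~\eqref{eq:CDGMintro} of Example~\ref{ex:gromollmeyer}; the cases $k\ge 2$ follow by iterating the equivariant connected-sum construction of~\cite{SperancaCavenaghiPublished} along a free orbit of the induced $\star$-action on $\mathrm{S}^7$, with the connected sum lifted to $\mathrm{Sp}(2)$ so as to yield $\star$-diagrams $\mathrm{S}^7 \leftarrow P_k \rightarrow \#^k\Sigma^7_{GM}$ with structure group $\mathrm{S}^3$. Items~(2)--(4) are the $\star$-diagram realizations of the $8$-dimensional exotic sphere, the generator $\Sigma^{10}$ of the index-two spin-boundary subgroup of homotopy $10$-spheres, and the Kervaire spheres $\Sigma^{4n+1}$ produced in~\cite{SperancaCavenaghiPublished,speranca2016pulling}. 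Item~(5) is Example~\ref{ex:cohomogeneityone} verbatim, and item~(6) is the content of Example~\ref{ex:moreconnectedsums}.

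The main obstacle is geometric rather than analytic: exhibiting a single principal bundle $P$ carrying two free, commuting $G$-actions whose respective quotients are, on one hand, the standard smooth representative and, on the other, the prescribed exotic one. This is most delicate for the iterated connected sums in items~(1) and~(6), where the $\star$-structure must be propagated through the connected-sum operation without breaking either the equivariance or the freeness of the two actions. These constructions, which are the engine of the entire argument, are exactly the subject of~\cite{SperancaCavenaghiPublished,speranca2016pulling,cavenaghi2019positive}; the proof of Theorem~\ref{thm:spectraintro} essentially amounts to observing that the full force of those geometric realizations, once plugged into Proposition~\ref{prop:joint} and Theorem~\ref{thm:isospectralornot}, yields both halves of the stated dichotomy for every pair on the list.
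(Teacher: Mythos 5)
Your proposal follows essentially the same route as the paper: the dichotomy is obtained exactly by combining Lemma~\ref{lem:totallygeodesicmetric} and Proposition~\ref{prop:joint} for the isospectral direction with Theorem~\ref{thm:isospectralornot} for the non-isospectral one, and the remaining content is the catalogue of $\star$-diagram realizations given in Section~\ref{sec:equivariant}. One small correction: for item (1) with $k\ge 2$ the paper realizes $\#^k\Sigma^7_{GM}$ directly as the pull-back of the Gromoll--Meyer diagram by the octonionic $k$-th power map (the $(\Sigma^7_k)$ construction), and its equivariant connected sums are performed at \emph{fixed points} with equivalent isotropy representations, not along a free orbit as you describe.
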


With Theorem \ref{thm:spectraintro} in hands, it is natural to ask whether invariant geometric objects \textit{ignore} the chosen smooth structure to a fixed underlined topological space. We prove
\begin{theorem}[$G$-invariant Kazdan--Warner problem on $\star$-diagrams]\label{ithm:aquiela}
	Consider a $\star$-diagram $M\leftarrow P\to M'$ with $G$ connected. Then a basic function on $M$ is the scalar curvature of a $G$-invariant metric on $M$ if and only if it is the scalar curvature of a $G$-invariant metric on $M'$ if and only if it lifts to the scalar curvature of a $G{\times}G$-invariant metric on $P$.
\end{theorem}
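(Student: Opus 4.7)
The plan is to establish the two-way pullback correspondence between basic scalar curvature functions: (a)~$\Leftrightarrow$~(c) and (b)~$\Leftrightarrow$~(c), where (a), (b), (c) denote the three realizability conditions in the statement. Lemma~\ref{lem:totallygeodesicmetric} makes the roles of $M$ and $M'$ symmetric relative to $P$, so it suffices to carry out the argument for (a)~$\Leftrightarrow$~(c). Throughout I view $f$ interchangeably on all three spaces via the identification $C^\infty_G(M) \cong C^\infty_{G\times G}(P) \cong C^\infty_G(M')$ established earlier in this section.

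For the forward direction (a)~$\Rightarrow$~(c), suppose $\mathsf{g}_M$ is a $G$-invariant metric with $\mathrm{scal}(\mathsf{g}_M) = f$. I would lift $\mathsf{g}_M$ to the $G{\times}G$-invariant Kaluza--Klein metric $\mathsf{g}_\omega = \pi^*\mathsf{g}_M + Q\circ\omega\otimes\omega$ on $P$ as in Lemma~\ref{lem:totallygeodesicmetric}. Since the $\bullet$-fibers are totally geodesic with constant-scalar-curvature bi-invariant model $(G,Q)$, O'Neill's submersion formula gives
\[
\mathrm{scal}(\mathsf{g}_\omega) \;=\; \pi^*f + \mathrm{scal}(G,Q) - \|A_\pi\|^2,
\]
and the correction $\mathrm{scal}(G,Q) - \|A_\pi\|^2$ is itself a $G{\times}G$-invariant function, hence of the form $\pi^*\bar h$ for some $\bar h \in C^\infty_G(M)$. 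The naive Kaluza--Klein lift therefore realizes $\pi^*(f + \bar h)$ rather than $\pi^*f$; to eliminate $\bar h$ I would exploit the freedom in the Kaluza--Klein data (rescaling $Q \mapsto t^2 Q$ and varying the connection $\omega$) together with a $G$-invariant conformal deformation on $(M,\mathsf{g}_M)$, whose existence follows from the classical Kazdan--Warner solvability \cite{kazdaninventiones} transported to the invariant category (equivalently, to the common orbit space).

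For the reverse direction (c)~$\Rightarrow$~(a), given $\mathsf{g}_P$ with $\mathrm{scal}(\mathsf{g}_P) = \pi^*f$, the metric canonically induces a $G$-invariant Riemannian submersion metric $\mathsf{g}_M$ on $M$; the same O'Neill formula, read in reverse, yields
\[
\pi^*\mathrm{scal}(\mathsf{g}_M) \;=\; \mathrm{scal}(\mathsf{g}_P) - \mathrm{scal}(\text{fiber}) + \|A_\pi\|^2 \;=\; \pi^*f + \pi^*\bar h',
\]
so $\mathrm{scal}(\mathsf{g}_M)$ differs from $f$ by a basic function $\bar h'$, which is again absorbed by a $G$-invariant conformal change. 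Interchanging $\pi$ and $\pi'$ produces the analogous argument relating $M'$ to $P$, and transitivity closes the triple equivalence.

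The main obstacle is the absorption step: one must verify that the Kazdan--Warner trichotomy (which sign patterns for $f$ are realizable as invariant scalar curvatures) is preserved across $M$, $P$, and $M'$. Since the correction terms $\bar h, \bar h'$ are intrinsic to the common metric quotient $M/G \cong P/(G{\times}G) \cong M'/G$ supplied by Lemma~\ref{lem:totallygeodesicmetric}, and since the principle of symmetric criticality reduces the prescribed scalar curvature PDE on each of the three spaces to a single equation on this quotient, the solvability conditions on $M$, $M'$, and $P$ agree, so the three realizability conditions coincide.
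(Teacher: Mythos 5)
Your strategy is genuinely different from the paper's, and it has a gap at exactly the point you flag as "the main obstacle." The absorption step is circular: to convert the Kaluza--Klein lift with scalar curvature $\pi^*(f+\bar h)$ into a metric with scalar curvature $\pi^*f$ by an invariant conformal change is itself an instance of the prescribed scalar curvature problem, so its solvability is governed by the Kazdan--Warner trichotomy class of $(P,G{\times}G)$ --- which is precisely what has to be determined. Your proposed resolution, that symmetric criticality reduces the prescribed scalar curvature PDE on $M$, $M'$ and $P$ to "a single equation" on the common quotient, is not correct: the reductions produce \emph{different} equations. The conformal factor equation involves the dimensional constant $4(n-1)/(n-2)$, the critical exponent $(n+2)/(n-2)$, and the background scalar curvature as a zeroth-order term, and all three of these differ between $M$ (dimension $n$) and $P$ (dimension $n+\dim G$); moreover the reduced operators carry different mean-curvature drift terms. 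Agreement of the quotient metric spaces therefore does not force agreement of solvability. Concretely, the case your argument cannot close is $f>0$ (or $f\equiv 0$): realizability then hinges on whether the manifold admits an invariant metric of positive (resp.\ zero) scalar curvature, and nothing in the O'Neill bookkeeping transfers that property from $M$ to $M'$ --- note that $\pi'$ is a submersion from $P$, not from $M$, so positivity on $M$ does not directly push to $M'$ through your formulas.

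The paper closes exactly this gap by identifying the trichotomy class rather than manipulating the PDE. If $\mathfrak g$ is non-abelian, both $(M,G)$ and $(M',G)$ admit invariant metrics of positive scalar curvature by Lawson--Yau \cite{lawson-yau} (see also \cite{Cavenaghi2023}), so by \cite{kazdaninventiones} together with the equivariant refinement in \cite{cavenaghi2023kazdanwarner} \emph{every} basic function is realizable on each of $M$, $M'$ and $P$, and the equivalence is trivial. If $G$ is abelian it is a torus, and Wiemeler's theorem \cite[Theorem~2.2]{wiemeler2016circle} is invoked to show that $P$ admits a $G{\times}G$-invariant positive scalar curvature metric if and only if $M$ and $M'$ admit $G$-invariant ones, which pins down the trichotomy class simultaneously on all three spaces. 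Your O'Neill computation is a reasonable ingredient (and essentially the canonical-variation argument does appear implicitly in the non-abelian case, where shrinking the fibers produces positive scalar curvature upstairs), but without the case split on $\mathfrak g$ and the external inputs of \cite{lawson-yau} and \cite{wiemeler2016circle} the argument does not go through.
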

\begin{corollary}\label{cor:kW}
    The following pair of homeomorphic but not diffeomorphic manifolds admit the same ring of invariant scalar curvature functions for a certain isometry group:
    \begin{enumerate}
        \item $\mathrm{S}^7,~\#^k\Sigma_{GM}^7$ where $\Sigma_{GM}^7$ is the Gromoll--Meyer exotic sphere (Example \ref{ex:gromollmeyer}), $k\in\mathbb  N$ and $\#$ denote the connected sum,
        \item $\mathrm{S}^8,~\Sigma^8$ where $\Sigma^8$ is the only $8$-dimension exotic sphere,
        \item $\mathrm{S}^{10},~\Sigma^{10}$ where $\Sigma^{10}$ is a generator of the index two homotopy subgroup of $10$-dimension homotopy spheres that bound spin manifolds,
        \item $\mathrm{S}^{4n+1},~\Sigma^{4n+1}$ where $\Sigma^{4n+1}$ are the known \emph{Kervaire spheres},
        \item  the pair of total spaces of the bundles appearing in Example \ref{ex:cohomogeneityone},
             \item the classical and exotic realization of the manifolds appearing in Example \ref{ex:moreconnectedsums}.
    \end{enumerate}
\end{corollary}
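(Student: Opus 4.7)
The plan is to deduce Corollary \ref{cor:kW} as an immediate consequence of Theorem \ref{ithm:aquiela}, so the work reduces entirely to exhibiting a $\star$-diagram with compact connected structure group for each of the six listed pairs of homeomorphic but not diffeomorphic manifolds. Once such a diagram $M \leftarrow P \to M'$ is produced, Theorem \ref{ithm:aquiela} applies symmetrically (the structure group $G$ acts on both sides of the diagram), and gives the equality
\[
\{\mathrm{scal}_{\mathsf{g}} : \mathsf{g}\ \text{is a $G$-invariant metric on } M\} = \{\mathrm{scal}_{\mathsf{g}'} : \mathsf{g}'\ \text{is a $G$-invariant metric on } M'\}
\]
at the level of basic (i.e., $G$-invariant) functions identified through $M/G \cong P/(G{\times}G) \cong M'/G$. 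This is precisely the common ring of invariant scalar curvature functions claimed in the corollary.

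I would then address the six items in order, in each case invoking the known $\star$-diagram realization. For (1), the Gromoll--Meyer $\star$-diagram \eqref{eq:CDGMintro} covers the case $k=1$; the case of connected sums $\#^k \Sigma^7_{GM}$ requires the equivariant connected-sum constructions from \cite{SperancaCavenaghiPublished,speranca2016pulling}, which produce a $\star$-diagram $\#^k \mathrm{S}^7 \leftarrow P_k \to \#^k \Sigma^7_{GM}$ whose left-hand quotient is still $\mathrm{S}^7$ (up to the standard $\#^k \mathrm{S}^7 \cong \mathrm{S}^7$). Items (2)--(4) are handled by the exotic sphere $\star$-diagrams that are by now standard in this line of work (Kervaire-sphere constructions and the specific low-dimensional exotic spheres are realized as quotients of Brieskorn-type principal $G$-bundles carrying a second commuting free action). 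Item (5) is Example \ref{ex:cohomogeneityone} verbatim, and item (6) is Example \ref{ex:moreconnectedsums} verbatim.

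The hard part is not in the present corollary but in marshalling the correct existing $\star$-diagrams; once they are in place, the proof is a single application of Theorem \ref{ithm:aquiela}. The only subtlety worth flagging explicitly is that the ``ring of invariant scalar curvature functions'' on $M$ and on $M'$ must be identified through the canonical isomorphism
\[
C^{\infty}_G(M) \;\cong\; C^{\infty}_{G\times G}(P) \;\cong\; C^{\infty}_G(M')
\]
used throughout Section \ref{sec:basicspectra}; the content of Theorem \ref{ithm:aquiela} is precisely that realizability as a scalar curvature is preserved under this identification, in both directions.

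Accordingly, the proof I would write is essentially a one-line deduction: \emph{For each pair $(M,M')$ listed, Examples \ref{ex:gromollmeyer}, \ref{ex:cohomogeneityone}, \ref{ex:moreconnectedsums} together with the equivariant constructions of \cite{SperancaCavenaghiPublished,speranca2016pulling} provide a $\star$-diagram $M \leftarrow P \to M'$ with compact connected structure group $G$. Theorem \ref{ithm:aquiela} then identifies the rings of invariant scalar curvature functions on $M$ and $M'$, proving the corollary.} Any remaining effort is bookkeeping to match the structure group $G$ appearing in each example against the isometry group under which ``invariance'' is interpreted in the statement.
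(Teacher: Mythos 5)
Your proposal is correct and follows exactly the paper's route: the corollary is a direct application of Theorem \ref{ithm:aquiela} to the $\star$-diagrams furnished in Section \ref{sec:equivariant}, with the rings identified via $C^{\infty}_G(M)\cong C^{\infty}_{G\times G}(P)\cong C^{\infty}_G(M')$. One small factual correction: the diagrams for items (1)--(4) are not Brieskorn-type bundles but pullbacks of $\mathrm{Sp}(2)\to\mathrm{S}^7$ (along the octonionic $k$-th power for item (1), which yields $\mathrm{S}^7\leftarrow\phi^*\mathrm{Sp}(2)\to\#^k\Sigma^7_{GM}$ directly, and along equivariant suspensions for items (2)--(3)) and of the frame bundle $\mathrm{SO}(2n{+}2)\to\mathrm{S}^{2n+1}$ along $J\tau$ for item (4).
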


In Section \ref{sec:equivariant}, we both prove Theorem \ref{ithm:aquiela} and furnish the examples appearing in Theorem \ref{thm:spectraintro} and Corollary \ref{cor:kW}.

\section{On the realizability of scalar curvature functions on homotopy spheres}
\label{sec:equivariant}

Since Milnor introduced the first examples of \textit{exotic manifolds} \cite{mi}, many new exotic spaces have been produced. For instance, there are uncountable many pairwise non-diffeomorphic structures on $\mathbb{R}^4$ (see \cite{taubes}); exotic manifolds not bounding spin manifolds \cite{hitchin1974harmonic}; exotic projective spaces, and connected sums of exotic manifolds \cite{SperancaCavenaghiPublished}. Following \cite{duran2001pointed,speranca2016pulling,Cavenaghi2023,cavenaghi2019positive}, several realizations of exotic manifolds are obtained using $\star$-diagrams such as \eqref{eq:CDintro}. This section explores the relationship between the admissibility of invariant scalar curvature functions on the manifolds $M$ and $M'$, culminating in the proof of  Theorem \ref{ithm:aquiela}. 

The first construction of exotic manifolds uses the classical \textit{Reeb's Theorem} to show that specific $7$-dimensional total spaces of sphere bundles are homeomorphic to a standard sphere. Moreover, one can recover the smooth structure of a manifold through its space of smooth functions (see, for example, \cite[Problem 1-C]{milnor1974characteristic}). However, in the presence of a $\star$-diagram, the set of basic functions of $M$ and $M'$ are naturally identified since they are naturally identified with the space of $G\times G$-invariant functions on $P$, proving that the set of basic functions does not recover $(M, G)$. Theorem \ref{ithm:aquiela} reinforces this fact in the sense that invariant scalar curvature functions should not distinguish smooth structures.

\begin{proof}[Proof of Theorem \ref{ithm:aquiela}]
Let $M\leftarrow P \rightarrow M'$ be a $\star$-diagram and $p\in P$.
Denote $\pi(p)=x$ and $\pi'(p)=x'$.
First, a straightforward calculation shows that the isotropy groups $G_x$ and $G_{x'}$ are isomorphic -- \cite[Theorem 2.2 and Proposition 5.3]{SperancaCavenaghiPublished}. Therefore, if $G$ acts effectively on $M$, it does on $M'$. We now observe that if $G$ has a non-Abelian Lie algebra, then both $(M, G), (M', G)$ admit any invariant function as scalar curvature of some invariant Riemannian metric. This holds since a non-Abelian Lie algebra ensures the existence of an invariant metric of positive scalar curvature -- \cite{lawson-yau, Cavenaghi2023}. The main result in \cite{kazdaninventiones} ensures that any scalar curvature function is admissible as scalar curvature of some metric close to this positively curved metric. The arguments in \cite{cavenaghi2023kazdanwarner} show that the resulting metric is $G$-invariant.

Finally, if $G$ is Abelian, then it is a torus. Therefore, we can apply \cite[Theorem 2.2]{wiemeler2016circle} to conclude that $P$ admits a $G\times G$-invariant metric with positive scalar curvature if and only if both $M$ and $M'$ admit $G$-invariant metrics with positive scalar curvature, as wanted. \qedhere
\end{proof}

The remaining section presents many examples where Theorems \ref{thm:isospectralornot}-\ref{ithm:aquiela} can be applied. We explicitly build the examples in the statements of Theorem \ref{thm:spectraintro} and Corollary \ref{cor:kW}.

\begin{example}[Pulling-back $\star$-diagrams]
 Consider a $\star$-diagram $M\leftarrow P\rightarrow M'$ and a $G$-manifold $N$. Let $\phi: N\to M$ be a $G$-equivariant function. We can pull-back this $\star$-diagram producing a new quotient $(\phi^*P)/G=N'$. The pull-back construction was applied in \cite{speranca2016pulling,SperancaCavenaghiPublished} to obtain the following examples:
\begin{itemize}
\item[$(\Sigma^7_k)$:] consider $\phi:\mathrm{S}^7\to \mathrm{S}^7$ as the octonionic $k$th fold power. Then the corresponding $\star$-diagram $\mathrm{S}^7\leftarrow \phi^*\mathrm{Sp}(2)\to (\mathrm{S}^7)'$ yields $(\mathrm{S}^7)'$ diffeomorphic to the connected sum of $k$ times $\Sigma^7_{GM}$ ; 
\item[($\Sigma^8)$:] there is a $\mathrm{S}^3$-equivariant suspension $\eta:\mathrm{S}^8\to \mathrm{S}^7$ of the Hopf map $\mathrm{S}^3\to \mathrm{S}^2$ whose quotient $(\mathrm{S}^8)'=\eta^*\mathrm{Sp}(2)/\mathrm{S}^3$ is the only exotic 8-sphere;
\item[$(\Sigma^{10})$:] there is a $\mathrm{S}^3$-equivariant suspension $\theta: \mathrm{S}^{10}\to \mathrm{S}^7$ of a generator of $\pi_6\mathrm{S}^3$ whose induced $\star$-quotient $(\mathrm{S}^{ 10})'$ is a generator of the index two subgroup os homotopy 10-spheres that bound spin manifolds;
\item[$(\Sigma^{4n{+}1})$:] the frame bundle $\rm{pr}_n:\mathrm{SO}(2n{+}2)\to \mathrm{S}^{2n+1}$ can be also seen as a $\star$-diagram: one can endow $\mathrm{SO}(2n{+}2)$ with both the right and left multiplication by $\mathrm{SO}(2n{+}1)$. In this case, $M=M'=\mathrm{S}^{2n+1}$. However, there is a pull-back map $J\tau:\mathrm{S}^{4n{+}1}\to \mathrm{S}^{2n+1}$,
whose $\star$-diagram $\mathrm{S}^{4n{+}1}\leftarrow (J\tau)^*\mathrm{SO}(2n{+}2)\to (\mathrm{S}^{4n{+}1})'$ has $(\mathrm{S}^{4n{+}1})'$ diffeomorphic to a Kervaire sphere. Moreover, one can `reduce' $G=\mathrm{SO}(2n{+}1)$ in to either $\mathrm{U}(n)$ or $\mathrm{Sp}(n)$ (supposing $n$ odd for the last).
\end{itemize}
\demo
\end{example}

\begin{example}[Gluing and connected sums]
Consider $W_1, W_2$ manifolds with boundaries and equips with $G$-actions. Assume that $f:\partial W_1\to\partial W_2$ is an equivariant diffeomorphism. Then one can produce a new manifold $W=W_1\cup_f W_2$ by gluing $W_1, W_2$ via $f$. $W$ thus inherits a natural smooth $G$-action whose restrictions to $W_1,W_2\subset W$ coincide with the original actions.

Interesting examples arise in the following way: let $(M_1, G),(M_2, G)$ be closed manifolds with $G$-actions. Suppose that $x_i\in M_i$, $i=1,2$ have the same \textit{orbit type}, that is, $G_{x_1}, G_{x_2}$ are subgroups in the same conjugacy class and their isotropy representations are equivalent. Then, one can remove small tubular neighborhoods of the orbits $Gx_1, Gx_2$ and glue the boundaries together. In particular, if $x_1,x_2$ are fixed points with equivalent isotropy representations, one can perform a connected sum. 

More generally, one can consider the case where $(M,G)$ admits an equivariant embedding of $(\mathrm{S}^k\times D^{l+1},G)$, where $G$ acts on $\mathrm{S}^k\times D^{l+1}$ is equipped with a linear action. In this case, one can perform surgery along $\psi$. At this point, $\star$-diagrams become quite useful since corresponding orbits in $M, M'$ often have the same orbit type, as the next lemma points out.

\begin{lemma}\label{lem:isotropy}
Let $M\leftarrow P \rightarrow M'$ be a $\star$-diagram and $p\in P$. There is a group isomorphism $\phi:G_{\pi(p)}\to G_{\pi'(p)}$  and a linear isomorphism $\psi$ such that $\rho_{\pi(p)}=\psi\rho_{\pi'(p)}\phi$. 
\end{lemma}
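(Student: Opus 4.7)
The plan is to construct $\phi$ directly from the commuting free actions on $P$, extract $\psi$ from the double-horizontal geometry of Lemma \ref{lem:totallygeodesicmetric}, and verify the intertwining by differentiating at $p$ the equation that defines $\phi$.

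\emph{The isomorphism $\phi$.} I take conventions as in Example \ref{ex:gromollmeyer}, with $\star$ a left and $\bullet$ a right action of $G$ on $P$; other cases differ only by inserting inverses. By construction of the descended $\star$-action on $M=P/\bullet$, an element $g\in G$ fixes $\pi(p)$ if and only if $g\star p$ lies in the $\bullet$-orbit of $p$. Freeness of the $\bullet$-action then yields a unique $h\in G$ with $g\star p=p\bullet h$, and I set $\phi(g):=h$. Commutativity of the two actions gives $(gg')\star p=(g\star p)\bullet h'=p\bullet(hh')$, so $\phi$ is a group homomorphism; applying $\pi'$ to the defining equation shows $h\in G_{\pi'(p)}$, and interchanging the roles of the two actions supplies the inverse, so $\phi$ is an isomorphism.

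\emph{The linear isomorphism $\psi$ and the intertwining.} I equip $P$ with the $G\times G$-invariant Kaluza--Klein metric $\mathsf{g}_{\omega}$ of Lemma \ref{lem:totallygeodesicmetric} and consider the double-horizontal space
\[\mathcal H''_p:=(T_p(G\star p)+T_p(G\bullet p))^{\perp_{\mathsf{g}_{\omega}}}\]
already appearing in its proof sketch. Both $d\pi_p$ and $d\pi'_p$ restrict to linear isomorphisms from $\mathcal H''_p$ onto the slice spaces $\nu_{\pi(p)}\subset T_{\pi(p)}M$ and $\nu_{\pi'(p)}\subset T_{\pi'(p)}M'$, so I set $\psi:=d\pi'_p\circ(d\pi_p|_{\mathcal H''_p})^{-1}$. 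For $g\in G_{\pi(p)}$ and $h:=\phi(g)$, consider the self-diffeomorphism $F\colon P\to P$, $F(q):=(g^{-1}\star q)\bullet h$; since $g\star p=p\bullet h$ it fixes $p$, and since it is $G\times G$-equivariant $dF_p$ preserves $\mathcal H''_p$. The relation $\pi(q\bullet h)=\pi(q)$ gives $\pi\circ F=g^{-1}\cdot_M\pi$, so on $\mathcal H''_p$
\[d\pi_p\circ dF_p=\rho_{\pi(p)}(g^{-1})\circ d\pi_p,\]
whereas $\pi'(g^{-1}\star q)=\pi'(q)$ gives $\pi'\circ F=h\cdot_{M'}\pi'$, so on $\mathcal H''_p$
\[d\pi'_p\circ dF_p=\rho_{\pi'(p)}(h)\circ d\pi'_p.\]
Eliminating $dF_p$ through the definition of $\psi$ yields $\psi\circ\rho_{\pi(p)}(g^{-1})=\rho_{\pi'(p)}(\phi(g))\circ\psi$, which is the intertwining claimed by the lemma.

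\emph{Main obstacle.} The only genuine nuisance is bookkeeping of left versus right for the two actions: different conventions (for instance, both on the left as in Example \ref{ex:localmodelstar}) modify $\phi$ or $\psi$ by inverses and may change the final relation to $\rho_{\pi(p)}(g)=\psi^{-1}\rho_{\pi'(p)}(\phi(g))\psi$ or a variant thereof. The geometric content---that the slice representations at $\pi(p)$ and $\pi'(p)$ are linearly equivalent, and consequently the orbits $G\pi(p)\subset M$ and $G\pi'(p)\subset M'$ share a common orbit type---is independent of these choices.
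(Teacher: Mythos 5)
Your proof is correct and follows essentially the same route as the paper: the paper also identifies the normal space $T_pP/T_p(G\times G)p$ with the normal spaces of the orbits $G\pi(p)$ and $G\pi'(p)$ via $\overline{\mathrm{d}\pi}$ and $\overline{\mathrm{d}\pi'}$, and uses equivariance of $\pi,\pi'$ to intertwine the slice representations (your $\mathrm{d}F_p$ is exactly the paper's $\rho_p(h,\phi(h))$, realized on the metric complement $\mathcal H''_p$ rather than on the quotient). You additionally spell out the construction of $\phi$, which the paper dismisses as a direct computation, and your caveat about left/right conventions accounts for the only discrepancy with the stated formula.
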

\begin{proof}

Denote $\pi(p)=x$ and $\pi'(p)=x'$. For simplicity, we only prove the last assertion since the existence of $\phi$ follows by direct computation.

Note that $\mathrm{d}\pi_p,\mathrm{d}\pi'$ define isomorphisms between normal spaces:
\[\frac{T_xM}{T_xGx} \stackrel{\overline{\mathrm{d}\pi}}{\longleftarrow}\frac{T_pP}{T_p(G{\times}G)p} \stackrel{\overline{\mathrm{d}\pi'}}{\longrightarrow} \frac{T_{x'}M'}{T_{x'}Gx'}. \]
Moreover, since $\pi,\pi'$ commute with the (respective complementary) actions, 
\[\rho_x(h)\overline{\mathrm{d}\pi}(\overline{v}) ~{\rightarrow}~\rho_p(h,\phi(h))\overline{v} ~{\leftarrow}~ \rho_{x'}(\phi(h))\overline{\mathrm{d}\pi'}(\overline{v}),\]
inducing the desired identification.
\end{proof}

The isomorphism $\phi$ in the examples $(\Sigma^7_k)$-$(\Sigma^{4n+1})$ are all of the form $\phi(h)=ghg^{-1}$ for $g\in G$ only depending on $p$. In such cases, every pair of points $x,x'$, $\pi^{-1}(x)\cap (\pi')^{-1}(x')\neq \emptyset$, have the same orbit type. 

We claim that several surgeries can be done equivariantly on the manifolds resulting in these examples. Moreover, such surgeries can be done by keeping the $\star$-diagram apparatus. We give more details on the $(\Sigma_k^7)$-case to illustrate the assertion.

In this case, $\mathrm{S}^3$ acts on $\mathrm{S}^7$ as $q(a,b)^T=(qa\bar q,qb\bar q)^T$. This action is inherited from the representation  $\tilde \rho:\mathrm{S}^3\to \mathrm{SO}(8)$ defined by $2\rho_0\oplus 2\rho_1$, where $\rho_0$ and $\rho_1$ stands for the trivial representation and the representation defined by the composition of the double-cover $\mathrm{S}^3\to \mathrm{SO}(3)$ and the standard action of $\mathrm{SO}(3)$ in $\mathbb  R^3$. I.e., $\tilde \rho$ is the double suspension of the bi-axial action of $\mathrm{SO}(3)$ in $\mathbb  R^6$, up to a double-cover.

Note that $(a,b)^T$ is a fixed point of $\tilde \rho$ whenever $a,b\in\mathbb  R$ and consider another manifold $(M^7,\mathrm{S}^3)$ with a fixed point $p$ whose isotropy representation is $\rho_0\oplus 2\rho_1$. One can produce a standard degree-one equivariant map $\phi: M^7\to \mathrm{S}^7$ by `wrapping' $\mathrm{S}^7$ with an open ball centered at the fixed point and sending the remaining of $M$ to the antipodal of $\phi(p)$. As in \cite[Theorem 4.1]{SperancaCavenaghiPublished}, the induced $\star$-diagram results in $M\leftarrow \phi^*P\to M\# \Sigma^7_k$.

To proceed with the surgery process, note that $(\mathrm{S}^7,\mathrm{S}^3)$ admits the equivariant submanifolds below. We omit the explicit embeddings and use the representation instead of $G$ as the notation $(M, G)$ to present more detailed information.
\begin{align*}
(\mathrm{S}^1,2\rho_0)\times (D^6,2\rho_1)&=\{(a,b)^T\in \mathrm{S}^7 ~|~(\rm{Re}(a),\rm{Re}(b))\neq (0,0)\} ;\\      (\mathrm{S}^2,\rho_1)\times (D^5,2\rho_0\oplus \rho_1)&=\{(a,b)^T\in \mathrm{S}^7 ~|~\rm{Im}(a)\neq 0\} ;\\
(\mathrm{S}^3,\rho_0\oplus\rho_1)\times (D^4,\rho_0\oplus \rho_1)&=\{(a,b)^T\in \mathrm{S}^7 ~|~a\neq 0\}; \\
(\mathrm{S}^4,2\rho_0\oplus\rho_1)\times (D^3,\rho_1)&=\{(a,b)^T\in \mathrm{S}^7 ~|~(a,\rm{Re}(b))\neq (0,0)\} ;\\
(\mathrm{S}^5,2\rho_1)\times (D^2,2\rho_0)&=\{(a,b)^T\in \mathrm{S}^7 ~|~(\rm{Im}(a),\rm{Im}(b))\neq (0,0)\} ;\\
(\mathrm{S}^6,\rho_0\oplus 2\rho_1)\times (D^1,\rho_0)&=\{(a,b)^T\in \mathrm{S}^7 ~|~(\rm{Im}(a),b)\neq (0,0)\}.
\end{align*}

Except for $\mathrm{S}^1\times D^6$ and $\mathrm{S}^4\times D^3$, every submanifold above can lie in an arbitrarily small region of a fixed $\rm{Re}(a)$. In particular, arbitrarily, many of these surgeries can be performed. 

Moreover, we conclude that such surgeries can be performed by preserving infinitely many fixed points. Therefore, the above degree-one map can be considered, producing a $\star$-diagram over the new manifold $M$. Although the connected sum applied to this context seems \textit{ad-hoc}, the resulting manifold $(\phi^*P)/G=M\# \Sigma$ is the same manifold one obtains by performing the same surgeries on $\Sigma$. \demo
\end{example}
\begin{example}[More connected sums]\label{ex:moreconnectedsums}
A list of manifolds whose fixed points have isotropy representations isomorphic to the ones of the examples $(\Sigma^7_k)$-$(\Sigma^{4n+1})$ are found in \cite{SperancaCavenaghiPublished}. We compile it here:
\begin{proposition}[Cavenaghi--Sperança]\label{thm:llohann}
The following manifolds have fixed points whose isotropy representations are isomorphic to the ones in $(\Sigma^7_k)$-$(\Sigma^{4n+1})$:
\begin{enumerate}
    \item $(\Sigma^7_k)$: any 3-sphere bundle over $\mathrm{S}^4$ ;
    \item $(\Sigma^8)$: every 3-sphere bundle over $\mathrm{S}^5$ or a 4-sphere bundle over $\mathrm{S}^4$;
    \item $(\Sigma^{10})$:
    \begin{enumerate}
        \item $M^8\times \mathrm{S}^2$ with $M^8$ as in item $(ii)$;
        \item any 3-sphere bundle over $\mathrm{S}^7$, 5-sphere bundle over $\mathrm{S}^5$ or 6-sphere bundle over $\mathrm{S}^4$;
    \end{enumerate}
    \item $(\Sigma^{4m+1},\mathrm{U}(n))$: \label{item:5} 
    \begin{enumerate}
        \item a sphere bundle $\mathrm{S}^{2m}\hookrightarrow M^{4m+1}\to \mathrm{S}^{2m+1}$ associated to any multiple
        of $\mathrm{O}(2m{+}1)\hookrightarrow \mathrm{O}(2m{+}2)\to \mathrm{S}^{2m+1}$, the frame bundle of $\mathrm{S}^{2m+1}$
        \item a $\mathbb  C\rm P^m$-bundle $\mathbb  C \rm P^{m}\hookrightarrow M^{4m+1}\to \mathrm{S}^{2m+1}$ associated to any multiple of the bundle of unitary frames $\mathrm{U}(m)\hookrightarrow \mathrm{U}(m+1)\to \mathrm{S}^{2m+1}$
        \item $M^{4m+1}=\frac{\mathrm{U}(m+2)}{S\mathrm{U}(2)\times \mathrm{U}(m)}$    
    \end{enumerate}
    \item $(\Sigma^{8r+5},\mathrm{Sp}(r))$: $M\times N^{5-k}$, where $N$ is any manifold and
    \begin{enumerate}
        \item $\mathrm{S}^{4r+k-1}\hookrightarrow M^{8r+k}\to \mathrm{S}^{4r+1}$ is the $k$-th suspension of the unitary tangent $\mathrm{S}^{4r-1}\hookrightarrow T_1\mathrm{S}^{4r{+}1}\to \mathrm{S}^{4r+1}$,
        \item $k=1$ and $\mathbb  H\rm P^{m}\hookrightarrow M^{8m+1}\to \mathrm{S}^{4m+1}$ is the $\mathbb  H\rm P^m$-bundle associated to any multiple of $\mathrm{Sp}(m)\hookrightarrow \mathrm{Sp}(m+1)\to \mathrm{S}^{4m+1}$
        \item $k=0$ and $M=\frac{\mathrm{Sp}(m+2)}{\mathrm{Sp}(2)\times \mathrm{Sp}(m)}$
        \item $k=1$ and $M=M^{8m+1}$ is as in item $(iv)$
    \end{enumerate}
\end{enumerate}    
\end{proposition}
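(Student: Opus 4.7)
The plan is to handle the list item by item, each time exhibiting an explicit action of the relevant group ($G \in \{\mathrm{S}^3,\mathrm{U}(n),\mathrm{Sp}(n)\}$) together with a distinguished fixed point $p$, and then matching the tangential $G$-representation on $T_p M$ against the target in the corresponding example $(\Sigma^7_k)$--$(\Sigma^{4n+1})$. These targets are read off the ambient linear models by deleting one $\rho_0$ (the radial direction at the chosen fixed point): for $(\Sigma^7_k)$ the ambient representation $\tilde\rho = 2\rho_0\oplus 2\rho_1$ on $\mathbb H^2$ gives the target $\rho_0 \oplus 2\rho_1$, and analogous formulas produce all other targets.

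Most of the items, namely (1), (2), (3b), (4a) and (5a), concern sphere bundles $\mathrm{S}^f\hookrightarrow E\to \mathrm{S}^b$. For these I would equip the base $\mathrm{S}^b$ with the suspension $G$-action whose slice representation at a pole is the ``base part'' of the target, and equip a linear $\mathrm{S}^f\subset \mathbb R^{f+1}$ with a $G$-action whose tangential representation at a fixed point is the ``fiber part''. Realizing $E$ via a clutching map $\mathrm{S}^{b-1}\to \mathrm{SO}(f{+}1)$, the crucial step is to choose this map equivariantly for the two representations, so that the two chart-wise equivariant lifts patch together. The product cases in (3a) and (5) reduce immediately to these by placing the trivial $G$-action on the auxiliary factor ($\mathrm{S}^2$ or $N^{5-k}$) and using that isotropy representations add on products. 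The projective-bundle items (4b) and (5b) follow the same clutching scheme, now with $\mathbb C P^m$ or $\mathbb H P^m$ as fiber carrying its natural $\mathrm{U}(m)$- or $\mathrm{Sp}(m)$-action, whose tangential representation at an appropriate fixed coset is the defining one and combines directly with the base part.

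For the biquotient items (4c) and (5c), $\mathrm{U}(m{+}2)/(\mathrm{SU}(2)\times \mathrm{U}(m))$ and $\mathrm{Sp}(m{+}2)/(\mathrm{Sp}(2)\times \mathrm{Sp}(m))$, I would let the lower block $\mathrm{U}(m)$ or $\mathrm{Sp}(m)$ act, identify a coset it fixes, and decompose the reductive complement $\mathfrak m\subset \mathfrak g$ into irreducible pieces under its adjoint action. The decomposition into defining and conjugate representations is read off directly from the block structure of the relevant matrix Lie algebras, and comparison with the target is then routine.

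The principal obstacle is the equivariant realization of the clutching maps in the sphere-bundle step: one must check that, for the prescribed $G$-representations on the base pole and on the fiber, \emph{every} isomorphism class of bundle listed is represented by a $G$-equivariant clutching function $\mathrm{S}^{b-1}\to \mathrm{SO}(f{+}1)$. This is a genuine equivariant-obstruction computation, and it is precisely here that the numerical values of the ambient representations ($2\rho_0\oplus 2\rho_1$ and its higher analogues) become essential: they are tuned so that the relevant homotopy groups of $\mathrm{SO}$ are generated by classes with equivariant representatives, which is the phenomenon exploited in \cite[Section 4]{SperancaCavenaghiPublished}.
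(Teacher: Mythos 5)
The first thing to say is that the paper itself offers no proof of this proposition: it appears inside Example \ref{ex:moreconnectedsums} as a compilation of results quoted from \cite{SperancaCavenaghiPublished}, so there is no in-paper argument to compare yours against. Judged on its own, your strategy --- explicit linear models, suspension actions on the base, equivariant clutching functions for the sphere and projective bundles, trivial actions on the auxiliary factors, and an adjoint-action decomposition of $\mathfrak g/\mathfrak h$ for the homogeneous items --- is the right one and is consistent with how these examples are built in the cited source. Your heuristic for reading off the target representations (delete one $\rho_0$ from the ambient linear model, e.g.\ $2\rho_0\oplus2\rho_1\rightsquigarrow\rho_0\oplus2\rho_1$ for $(\Sigma^7_k)$) also matches the text of the paper.

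The gap is the one you name yourself, and it is not a peripheral technicality but the entire mathematical content of items (1)--(3) and the bundle parts of (4)--(5): since the statement asserts the conclusion for \emph{every} bundle in each family, you must show that every class in the relevant homotopy group admits an equivariant clutching representative compatible with the prescribed pole and fiber representations. This is not a single uniform obstruction computation. For item (1) it follows from the classical fact that the maps $\alpha_{k,l}(x)v=x^kvx^l$ generate $\pi_3(\mathrm{SO}(4))\cong\mathbb Z\oplus\mathbb Z$ and are equivariant for conjugation on base and fiber; but for, say, $3$-sphere bundles over $\mathrm S^5$ (classified by $\pi_4(\mathrm{SO}(4))\cong\mathbb Z_2\oplus\mathbb Z_2$) or the higher-rank families in (4a), (4b), (5a), (5b), the surjectivity of equivariant classes onto the full classification must be checked case by case, and your proposal gives no argument beyond pointing back to the reference. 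A complete proof would have to supply these verifications (or an explicit construction of each bundle as an associated bundle of an equivariant principal bundle, which is how the source handles the multiples of frame bundles in (4a), (4b), (5b)). The homogeneous items (4c), (5c) are, as you say, routine block-matrix computations once the acting subgroup and fixed coset are specified.
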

\demo
\end{example}    

\section{The proof of Theorem \ref{thm:isospectralornot}}
\label{sec:basicspectra}
 In this last section, we prove Theorem \ref{thm:isospectralornot}. Let $M\leftarrow P \rightarrow M'$ denote a $\star$-diagram with structure group $G$ compact and connected. The manifolds $M, M'$, and $P$ are assumed to be closed and connected. Assume that $\mathsf{g}_M,~\mathsf{g}_{\omega},~\mathsf{g}_{M'}$ are as in Lemma \ref{lem:totallygeodesicmetric}. Proposition \ref{prop:joint} implies that the basic spectra of $-\Delta_{\mathsf{g}_M},~-\Delta_{\mathsf{g}_{M'}}$ coincide. Let $u$ be a $G$-invariant eigenfunction associated with $\lambda_1(\mathsf{g}_{M'})$ and consider the general vertical warping metric \cite[Chapter 2]{gw} $\mathsf{g}_u:=\pi^*(\mathsf{g}_M)+e^{2u}Q\circ \omega\otimes \omega$, where $Q$ is a bi-invariant metric in $G$ and $\omega$ is as in Lemma \ref{lem:totallygeodesicmetric}. Let $\mathsf{g}'$ be the Riemannian metric in $M'$ making $\pi':(P,\mathsf{g}_u)\rightarrow (M',\mathsf{g}')$ be a Riemannian submersion. It suffices to show that the first positive (invariant) eigenvalues $\lambda_1(\mathsf{g}_{M'}),~\lambda_1(\mathsf{g}')$ for $-\Delta_{\mathsf{g}_{M'}}$ and $-\Delta_{\mathsf{g}'}$, respectively, are different. 

 Take $\phi\in C^{\infty}_G(M')$ satisfying $-\Delta_{\mathsf{g}'}\phi=\lambda_1(\mathsf{g}')\phi$. Since $\phi$ lifts to a function on $P$ and $-\Delta_{\mathsf{g}'}\phi=-\Delta_{\mathsf{g}_u}\phi-\mathrm{d} \phi(H^{\pi'}_{\mathsf{g}_u})$ we have
 \[-\Delta_{\mathsf{g}_u}\phi-\mathrm{d}\phi(H^{\pi'}_{\mathsf{g}_u})=\lambda_1(\mathsf{g}')\phi.\]
 Therefore,
 \[-\int_P\mathrm{d}\phi(H^{\pi'}_{\mathsf{g}_u})\mathrm{d}\mu_{\mathsf{g}_u}=\lambda_1(\mathsf{g}')\int_P\phi\mathrm{d}\mu_{\mathsf{g}_u}.\]
 Equation 2.1.4 in \cite[Chapter 2, p.46]{gw} ensures that if $\mathrm{dim} G=k$ then $H^{\pi'}_{\mathsf{g}_u}=-k\nabla^{\mathsf{g}_u}u$. Consequently, 
\begin{align*}-\int_P\mathrm{d}\phi(H^{\pi'}_{\mathsf{g}_u})\mathrm{d}\mu_{\mathsf{g}_u}&=k\int_P\mathsf{g}_u(\nabla^{\mathsf{g}_u}\phi,\nabla^{\mathsf{g}_u}u)\mathrm{d}\mu_{\mathsf{g}_u}\\
&=k\int_P\mathsf{g}_{M'}(\nabla^{\mathsf{g}_{M'}}\phi,\nabla^{\mathsf{g}_{M'}}u)\mathrm{d}\mu_{\mathsf{g}_u}\\
&\leq k\int_P|\nabla^{\mathsf{g}_{M'}}u|_{\mathsf{g}_{M'}}|\nabla^{\mathsf{g}_{M'}}\phi|_{\mathsf{g}_{M'}}\mathrm{d}\mu_{\mathsf{g}_u}\\
&\leq k\left(\int_P|\nabla^{\mathsf{g}_{M'}}u|_{\mathsf{g}_{M'}}^2\mathrm{d}\mu_{\mathsf{g}_u}\right)^{\frac{1}{2}}\left(\int_P|\nabla^{\mathsf{g}_{M'}}\phi|^2_{\mathsf{g}_{M'}}\mathrm{d}\mu_{\mathsf{g}_u}\right)^{\frac{1}{2}},
\end{align*}
where the second equality holds because
	\[(TG\pi'(p))^{\perp_{\mathsf{g}'}}\cong(T(G\times G)p)^{\perp_{\mathsf{g}_{u}}} \cong (TG\pi(p))^{\perp_{\mathsf{g}_M}} \cong (TG\pi'(p))^{\perp_{\mathsf{g}_{M'}}},~\forall p\in P.\]

Using that $|\nabla^{\mathsf{g}_{M'}}u|_{\mathsf{g}_{M'}}^2$ is $G$-invariant and applying Fubini's Theorem for Riemannian submersions \cite[Satz 1, p.210]{fubinirefenrece} one gets
\begin{align*}
\int_{P}|\nabla^{\mathsf{g}_{M'}}u|_{\mathsf{g}_{M'}}^2\mathrm{d}\mu_{\mathsf{g}_u}&=\int_{M'}\left(\int_{G}|\nabla^{\mathsf{g}_{M'}}u|_{\mathsf{g}_{M'}}^2\mathrm{d}\mu_{Q}\right)\mathrm{d}\mu_{\mathsf{g}_{M'}}\\
&=\mathrm{vol}_u(G)\int_{M'}|\nabla^{\mathsf{g}_{M'}}u|_{\mathsf{g}_{M'}}^2\mathrm{d}\mu_{\mathsf{g}_{M'}}.
\end{align*}
The volume $\mathrm{vol}_u(G)$ is computed regarding each orbit's induced Riemannian metric by $\mathsf{g}_u$. Similarly,
\[\int_P|\nabla^{\mathsf{g}_{M'}}\phi|^2_{\mathsf{g}_{u}}\mathrm{d}\mu_{\mathsf{g}_{u}}=\mathrm{vol}_u(G)\int_{M'}|\nabla^{\mathsf{g}'}\phi|^2_{\mathsf{g}'}\mathrm{d}\mu_{\mathsf{g}'}.\]
Therefore, the variational characterization for the first eigenvalue (\cite[Section 5]{chavel}) yields
\begin{align*}
[\mathrm{vol}_u(G)]^{-1}k\left(\int_P|\nabla^{\mathsf{g}_{M'}}u|_{\mathsf{g}_{M'}}^2\mathrm{d}\mu_{\mathsf{g}_u}\right)^{\frac{1}{2}}\left(\int_P|\nabla^{\mathsf{g}_{M'}}\phi|^2_{\mathsf{g}_{M'}}\mathrm{d}\mu_{\mathsf{g}_u}\right)^{\frac{1}{2}}\geq k\\\lambda_1(\mathsf{g}_{M'})^{\frac{1}{2}}\lambda_1(\mathsf{g}')^{\frac{1}{2}}\left(\int_{M'}u^2\mathrm{d}\mu_{\mathsf{g}_{M'}}\right)^{\frac{1}{2}}\left(\int_{M'}\phi^2\mathrm{d}\mu_{\mathsf{g}'}\right)^{\frac{1}{2}}.
\end{align*}

Thus,
\[\lambda_1(\mathsf{g}')^{1/2}\left(\int_P\phi\mathrm{d}\mu_{\mathsf{g}_u}\right)\leq \lambda_1(\mathsf{g}_{M'})^{\frac{1}{2}}k\mathrm{vol}_u(G)\left(\int_{M'}u^2\mathrm{d}\mu_{\mathsf{g}_{M'}}\right)^{\frac{1}{2}}\left(\int_{M'}\phi^2\mathrm{d}\mu_{\mathsf{g}'}\right)^{\frac{1}{2}}.\]
Hence
\begin{align*}\left(\frac{\lambda_1(\mathsf{g}')}{\lambda_1(\mathsf{g}_{M'})}\right)^{\frac{1}{2}}&\leq k\mathrm{vol}_u(G)\left(\int_{M'}u^2\mathrm{d}\mu_{\mathsf{g}_{M'}}\right)^{\frac{1}{2}}\left(\int_P\phi\mathrm{d}\mu_{\mathsf{g}_u}\right)^{-1}\left(\int_{M'}\phi^2\mathrm{d}\mu_{\mathsf{g}'}\right)^{\frac{1}{2}}\\
&=k\left(\int_{M'}u^2\mathrm{d}\mu_{\mathsf{g}_{M'}}\right)^{\frac{1}{2}}\left(\int_{M'}\phi\mathrm{d}\mu_{\mathsf{g}'}\right)^{-1}\left(\int_{M'}\phi^2\mathrm{d}\mu_{\mathsf{g}'}\right)^{\frac{1}{2}}.\end{align*} 
Finally, one concludes the desired result as we can indiscriminately scale $u$ so that the right-hand side above is strictly lesser than $1$.
	
\begin{acknowledgement}
The authors sincerely thank the anonymous referees for their critical and detailed review, significantly improving the manuscript. They also express their gratitude to Emilio Lauret, whose numerous remarks helped them better acknowledge the existing literature and clarify the results of this paper. Part of this work was conceived while L.F.C was a postdoctoral researcher at the University of Fribourg, Switzerland, and he is grateful to Prof. Anand Dessai for his support and confidence during that time.
\end{acknowledgement}

\begin{funding}
L. F. Cavenaghi acknowledges The São Paulo Research Foundation (FAPESP), grants no 22/09603-9, 23/14316-1 and the SNSF-Project 200020E\_193062 and the DFG-Priority Program SPP 2026, which supported him while a postdoc at the University of Fribourg, where part of this work was conceived. 
 
 J. M. do \'O acknowledges partial support from CNPq through grants 312340/2021-4, 409764/2023-0, 443594/2023-6, CAPES MATH AMSUD grant 88887.878894/2023-00
and Para\'iba State Research Foundation (FAPESQ), grant no 3034/2021.   
\end{funding}

\begin{flushleft}
 {\bf Ethical Approval:}  Not applicable.\\
 {\bf Competing interests:}  Not applicable. \\
 {\bf Authors' contributions:}    All authors contributed to the study conception and design. All authors performed material preparation, data collection, and analysis. The authors read and approved the final manuscript.\\
{\bf Availability of data and material:}  Not applicable.\\
{\bf Ethical Approval:}  All data generated or analyzed during this study are included in this article.\\
{\bf Consent to participate:}  All authors consent to participate in this work.\\
{\bf Conflict of interest:} The authors declare no conflict of interest. \\
{\bf Consent for publication:}  All authors consent for publication. \\
\end{flushleft}

\end{document}